\newtheorem{theorem}{Theorem}[section]
\newtheorem{proposition}[theorem]{Proposition}
\newtheorem{lemma}[theorem]{Lemma}
\theoremstyle{definition}
\newtheorem{question}[theorem]{Question}
\theoremstyle{remark}
\numberwithin{equation}{section}
\definecolor{darkgreen}{RGB}{0,162,0}
\definecolor{darkred}{RGB}{221,0,0}
\definecolor{darkblue}{RGB}{0,0,221}
\definecolor{gold}{RGB}{255,210,0}
\definecolor{red}{RGB}{242,43,29}
\begin{document}

\title{Non-orientable Nurikabe}

\author{Joseph Breen}
\address{University of Alabama, Tuscaloosa, AL 35401}
\email{jjbreen@ua.edu} \urladdr{https://sites.google.com/view/joseph-breen}

\author{Emma Copeland}
\address{University of Iowa, Iowa City, IA 52240}
\email{emma-r-copeland@uiowa.edu} 

\thanks{JB was partially supported by NSF Grant DMS-2038103 and an AMS-Simons Travel Grant. EC was partially supported by a University of Iowa URA award.}

\begin{abstract}
    We study Nurikabe puzzles on non-orientable surfaces. Specifically, we propose two versions of non-orientable Nurikabe and investigate their combinatorics on Möbius strips, Klein bottles, and projective planes of size $1\times n$. Our results establish new connections among the OEIS sequences A101946, A213387, A123203, and A001045 (the Jacobsthal sequence). 
\end{abstract}

\maketitle

\tableofcontents

\section{Introduction}

Nurikabe is a pencil puzzle, published and popularized by the Japanese company Nikoli alongside the likes of Sudoku, Slitherlink, and Shikaku \cite{boswell2022countingislands}. A Nurikabe puzzle begins with an $m\times n$ rectangular grid with a subset of squares populated by positive integers. A solution is a coloring of some of the empty squares black, so that the following rules are satisfied: 
\begin{itemize}
    \item[(N1)] The set of of black squares is orthogonally connected; here, two squares are \textit{orthogonally connected} (or \textit{adjacent}) if they share an edge. 
    \item[(N2)] The subset of black squares contains no $2\times 2$ sub-grids. 
    \item[(N3)] The complement of the black squares consists of orthogonally connected components, each containing a single integer denoting the number of squares in the connected component. 
\end{itemize}
The subset of black squares is referred to as the \textit{water}, a $2\times 2$ sub-grid of black squares is called a \textit{whirlpool} (see \cref{fig:wp}), and a \textit{river} is an orthogonally connected water region without whirlpools. An orthogonally connected component of the complement of the water is an \textit{island} consisting of \textit{land} squares. Thus, a solution to a Nurikabe puzzle is a single river of water determining orthogonally connected islands of prescribed sizes; see \cref{fig:nurikabe1} for an example.

\begin{figure}[ht]
	\centering
    \vskip-0.45cm
	\begin{overpic}[scale=.38]{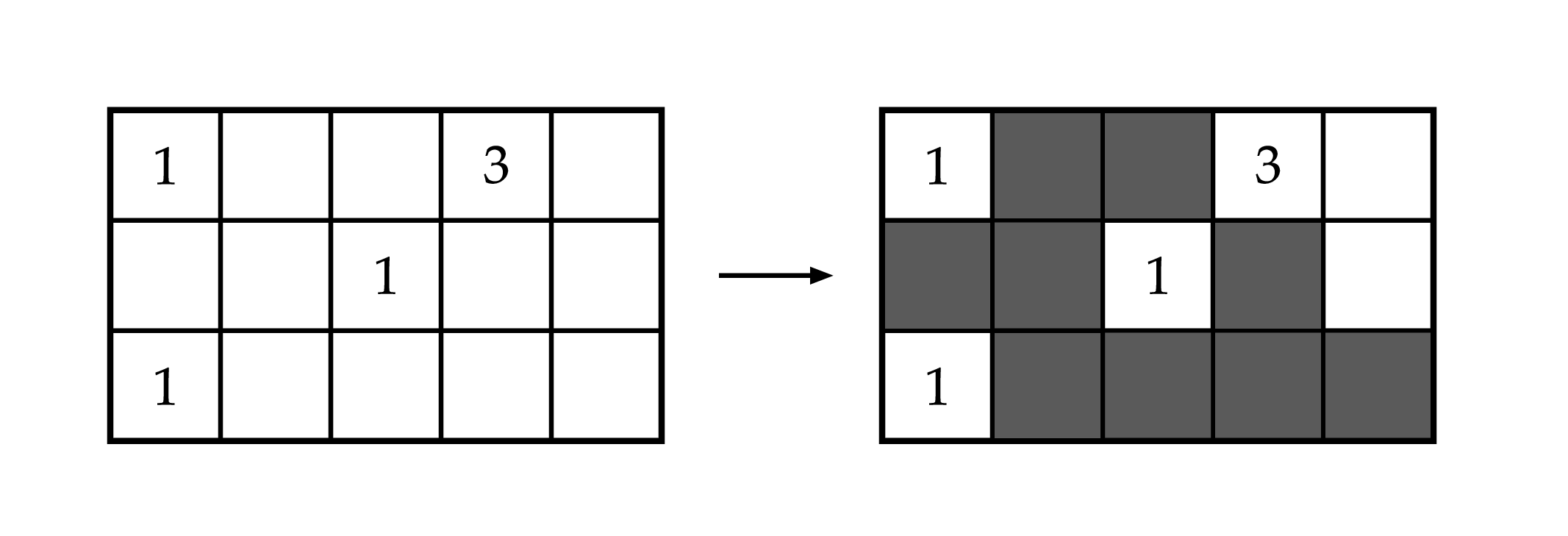}
        
	\end{overpic}
    \vskip-0.4cm
	\caption{A Nurikabe puzzle (left) and its solution (right).}
	\label{fig:nurikabe1}
\end{figure}

Academic studies of Nurikabe have focused on establishing algorithmic solutions \cite{amos2019ant,robert2021zkp} and studying computational complexity --- for instance, Nurikabe is NP-complete \cite{mcphail2003complexity,holzer2011computational}. Combinatorial explorations include counting problems for islands of size $1$ \cite{boswell2022countingislands}, and, most relevant result for us, an enumeration result of Goertz and Williams \cite{goertz2024quaternarygraycodeused} arising in the study of certain Ziggu puzzles. Specifically, the latter authors demonstrate a bijection between Ziggu states and the set $\mathcal{N}_n$ of all $2\times n$ \textit{Nurikabe rectangles} --- i.e.\ $2\times n$ rectangular black-white grids satisfying (N1,N2) --- counting the latter and identifying the result with \cite[A101946]{oeis}:
\begin{equation}\label{eq:N_n}
N_n := |\mathcal{N}_n| = 6\cdot 2^n - 3n - 5.  
\end{equation}
While Sudoku has attracted a significant amount of mathematical attention (see \cite{rosenhouse2011taking} for a survey), other puzzles like Nurikabe are comparatively less explored. 

\begin{figure}[ht]
	\centering
    \vskip-0.45cm
	\begin{overpic}[scale=.38]{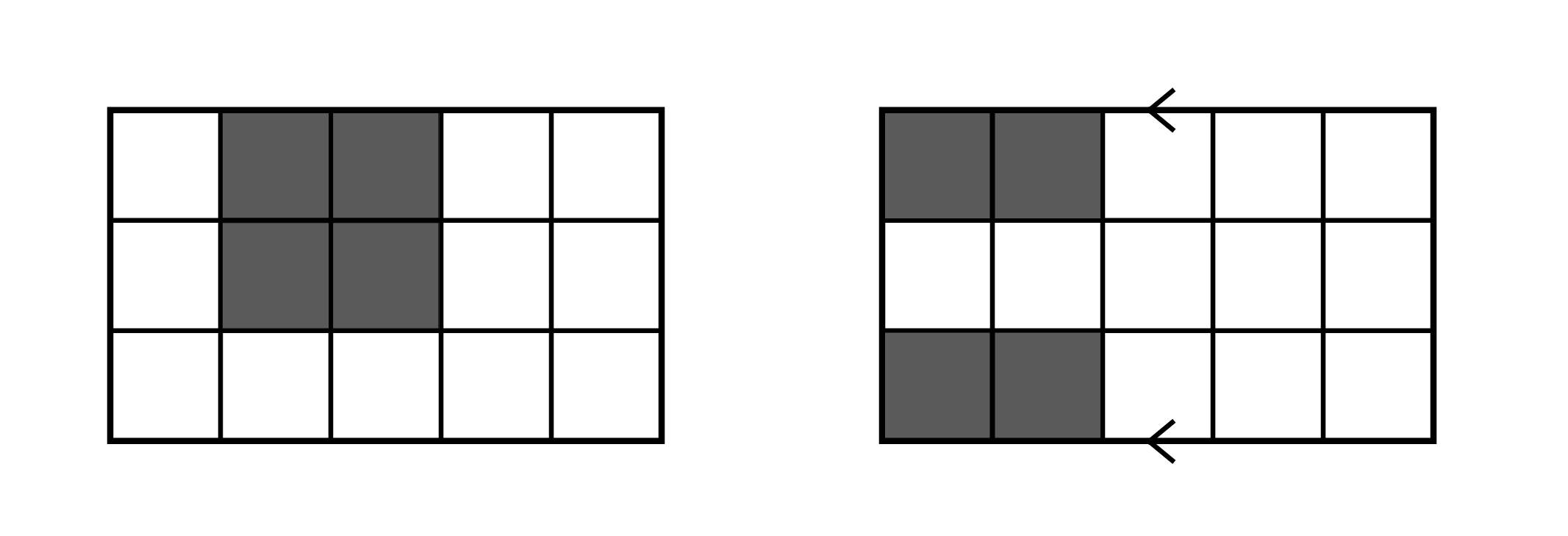}
        
	\end{overpic}
    \vskip-0.4cm
	\caption{A whirlpool on the rectangle (left) and a whirlpool on the annulus (right).}
	\label{fig:wp}
\end{figure}

\subsection{Non-orientable Nurikabe}

By identifying sides, we may view a Nurikabe grid as representing a fundamental domain for an annulus, a torus, a Möbius strip, a projective plane, or a Klein bottle. Toroidal Nurikabe has appeared recreationally in \cite{lance2014toroidal}, but to the best of the authors' knowledge there are no other recorded interactions of Nurikabe with non-trivial topology. 

In the orientable case (the annulus and torus) the Nurikabe rules (N1, N2) extend naturally, thanks to the observation that every \textit{interior vertex} (a corner of a square in the topological interior of the surface, as opposed to a \textit{boundary} vertex) on an annulus or torus obtained from a square-tiled rectangular fundamental domain has \textit{square-degree} $4$, i.e.\ is incident to four distinct squares, not counting multiplicity. However, the meaning of \textit{whirlpool} in the non-orientable case admits multiple interpretations, as interior vertices can have square-degree other than $4$; see, for example, \cref{fig:nurikabe2}. In particular, should a whirlpool correspond to an orthogonally connected $2\times 2$ subgrid of distinct water squares, or simply an interior vertex of any square-degree surrounded by water? 

\begin{figure}[ht]
	\centering
    \vskip-.6cm
	\begin{overpic}[scale=.44]{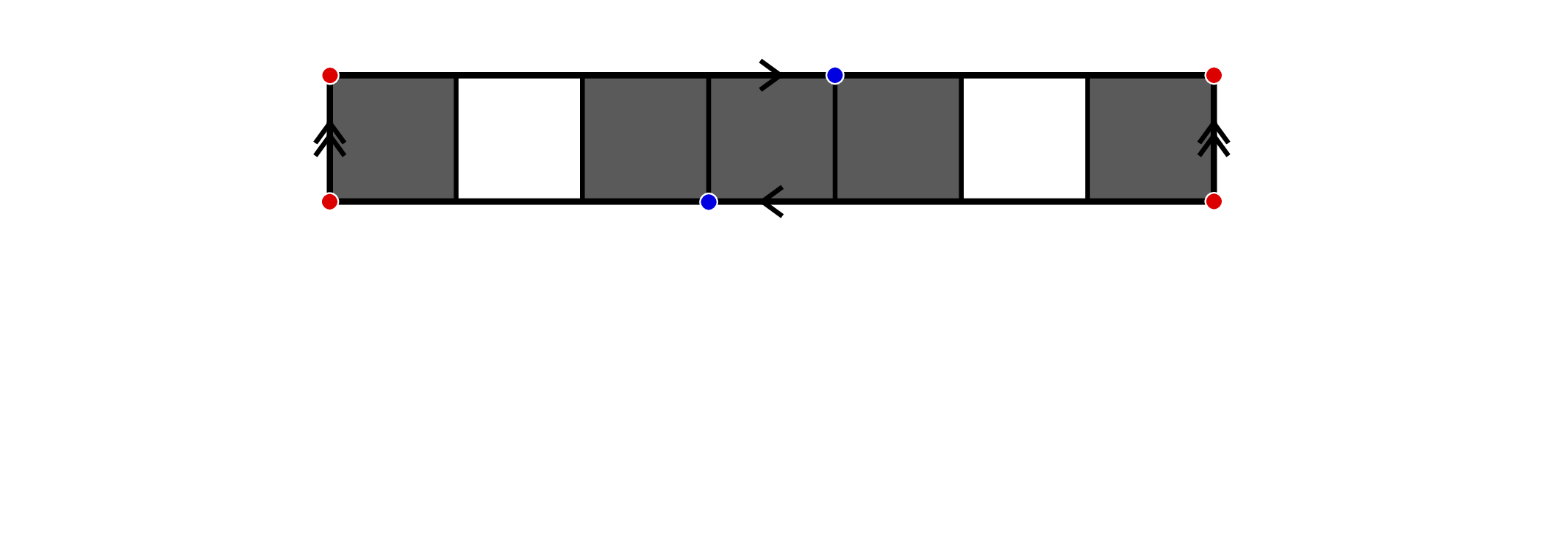}
        
	\end{overpic}
    \vskip-3cm
	\caption{A $1\times 7$ Klein bottle. The red vertex has square-degree $2$, the blue vertex has square-degree $3$, and each is surrounded by water. The grid satisfies (N2$\Box$), but does not satisfy (N2$\ocircle$).}
	\label{fig:nurikabe2}
\end{figure}

We propose two extensions of (N2), as follows: 
\begin{itemize}
    \item[(N2$\Box$)] The water contains no $2\times 2$ sub-grids of four distinct squares. Equivalently, there are no interior vertices of square-degree $4$ entirely surrounded by water. 

    \item[(N2$\ocircle$)] There are no interior vertices of any square-degree surrounded by water.
\end{itemize}
We refer to these extensions as the \emph{square} and \emph{loop} whirlpool rules, respectively. 

\begin{figure}[ht]
	\centering
	\begin{overpic}[scale=.38]{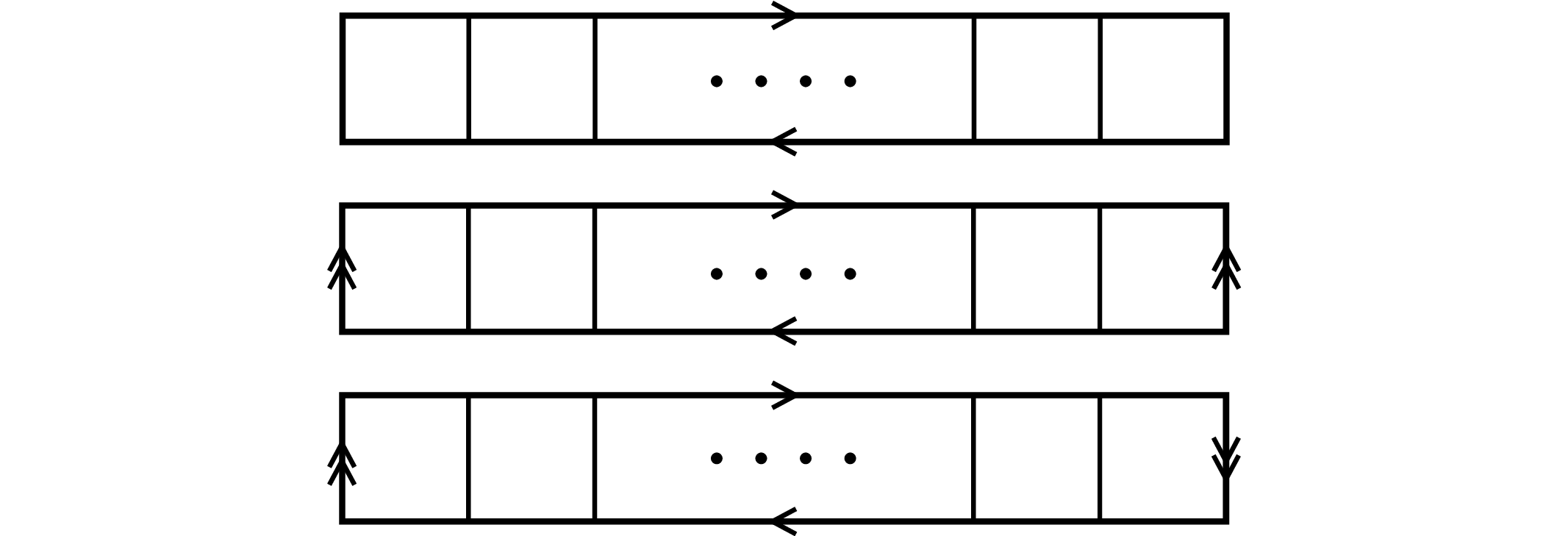}
        
	\end{overpic}
	\caption{Gluing conventions for $1\times n$ Möbius strips, Klein bottles, and projective planes, respectively.}
	\label{fig:nurikabe3}
\end{figure}

\subsection{Main results}


Recall that $\mathcal{N}_k$ denotes the set of $2\times k$ Nurikabe rectangles and $N_k := |\mathcal{N}_k|$. Let $\ast \in \{\Box,\ocircle\}$. Let $\accentset{\ast}{\mathcal{M}}_n$, $\accentset{\ast}{\mathcal{K}}_n$ and $\accentset{\ast}{\mathcal{P}}_n$ denote the sets of black and white fundamental domains of size $1\times n$ which satisfy (N1, N2$\ast$) upon gluing according to the respective conventions in \cref{fig:nurikabe3}. We informally refer to these as the sets of square/loop Nurikabe Möbius strips, Klein bottles, and projective planes, respectively. Likewise, we denote $\accentset{\ast}{M}_n := |\accentset{\ast}{\mathcal{M}}_n|$, $\accentset{\ast}{K}_n := |\accentset{\ast}{\mathcal{K}}_n|$, and  $\accentset{\ast}{P}_n := |\accentset{\ast}{\mathcal{P}}_n|$. 

\begin{theorem}\label{thm:main}
Let $N_{k} = 6\cdot 2^k - 3k - 5$ be the count of $2\times k$ Nurikabe rectangles from \eqref{eq:N_n}, and let $J_k = \frac{2^k - (-1)^k}{3}$ denote the Jacobsthal sequence. Let $n\geq 1$. Using the square whirlpool rule (N2$\Box$), we have
\begin{equation}\label{eq:main_square}
\accentset{\Box}{M}_n = \accentset{\Box}{K}_n = \accentset{\Box}{P}_n = \begin{cases}
 N_k   & \quad \text{if }n=2k \\
  N_k + 3\cdot 2^k - 2  & \quad \text{if }n=2k+1
\end{cases}.  
\end{equation}
Using the loop whirlpool rule (N2$\ocircle$), we have
\begin{align}\label{eq:main_loop}
\accentset{\ocircle}{M}_n &= \begin{cases}
  N_k - 2^k + 1 & \quad \text{if }n=2k \\
  N_k + 2^{k+1} - 1  & \quad \text{if }n=2k+1
\end{cases},\\
\accentset{\ocircle}{K}_n = \accentset{\ocircle}{P}_n &= \begin{cases}
\, 1 & \quad \text{if }n=1 \\
\, 3 & \quad \text{if }n=2 \\
\, 6 & \quad \text{if }n=3 \\
\, 7 & \quad \text{if }n=4 \\
  N_k  + 2J_{k-1} - 2^{k+1} +2 & \quad \text{if }n=2k\geq 6 \\
   N_k - 2J_k + 2^k & \quad \text{if }n=2k+1\geq 5
\end{cases}.\label{eq:main_loop2}
\end{align}
\end{theorem}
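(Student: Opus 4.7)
The strategy is to \emph{fold} each $1 \times n$ surface along its twisted top-bottom identification, mapping a Nurikabe configuration to an auxiliary $2 \times k$ diagram to which the count $N_k$ of $2 \times k$ Nurikabe rectangles can be applied. On all three surfaces, cells $i$ and $i+1$ are adjacent via their shared vertical edge, and the top-bottom flip creates additional cell adjacencies $i \leftrightarrow n+1-i$ together with a line of interior ``seam'' vertices; these all have square-degree $4$ except for a single degree-$2$ vertex when $n = 2k$ is even (surrounding cells $\{k, k+1\}$) and a pair of degree-$3$ vertices when $n = 2k + 1$ is odd (both with surrounding cells $\{k, k+1, k+2\}$). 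The Klein bottle and projective plane preserve this structure but add a left-right identification, which contributes one further degree-$2$ seam vertex (surrounding cells $\{1, n\}$), introduces no new degree-$4$ vertices, and does not alter the (N1) connectivity classes of water regions.

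For (N2$\Box$) on the Möbius with $n = 2k$ even, I would define a fold $\phi : \accentset{\Box}{\mathcal{M}}_{2k} \to \mathcal{N}_k$ placing cells $1, \ldots, k$ in row $1$ and cells $2k, 2k-1, \ldots, k+1$ in row $2$ of a $2 \times k$ rectangle. The key step is to verify that $\phi$ is a bijection: the cell adjacencies of the Möbius translate to the row/column adjacencies of a $2 \times k$ rectangle modulo a harmless doubled edge in the last column, and each water-surrounded degree-$4$ interior vertex at index $i \neq k$ corresponds precisely to a forbidden $2 \times 2$ water subgrid in columns $i, i+1$ of the folded diagram. This is the main obstacle, since the Möbius twist creates doubled edges and unusual identifications that must be shown not to disrupt either (N1) connectivity or the whirlpool condition; carrying it out gives $\accentset{\Box}{M}_{2k} = N_k$. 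For odd $n = 2k+1$, the fold leaves an unpaired central cell at position $k+1$, and direct case analysis over the colorings of this cell and its two neighbors produces the additive correction $3 \cdot 2^k - 2$. The equality $\accentset{\Box}{K}_n = \accentset{\Box}{P}_n = \accentset{\Box}{M}_n$ then follows because the additional identifications on the Klein bottle and projective plane introduce no new degree-$4$ vertices and no new connected water regions.

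For (N2$\ocircle$), one further excludes the configurations with water around a low-degree interior vertex. On the Möbius, the additionally forbidden set consists of (N2$\Box$)-valid configurations with cells $k, k+1$ both water (when $n = 2k$) or with cells $k, k+1, k+2$ all water (when $n = 2k+1$); in both parities a direct count via $\phi$ yields exactly $2^k - 1$ such configurations, producing the $-2^k + 1$ and $2^{k+1} - 1$ corrections in \eqref{eq:main_loop}. On the Klein bottle and projective plane, the additional degree-$2$ seam vertex $\{1, n\}$ requires an inclusion--exclusion: one subtracts configurations with cells $1, n$ both water and corrects for overlaps with the top-bottom seam constraint. The count of configurations satisfying both seam-water conditions simultaneously satisfies the recurrence $a_k = a_{k-1} + 2 a_{k-2}$, whose solution is the Jacobsthal sequence, producing the $\pm 2 J_k$ and $2 J_{k-1}$ terms in \eqref{eq:main_loop2}. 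The chief remaining difficulty is the bookkeeping for this inclusion--exclusion together with the separate treatment of $n \in \{1, 2, 3, 4\}$, where degenerate coincidences among the low-degree vertices invalidate the general argument.
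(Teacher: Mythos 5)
Your plan is correct and follows essentially the same route as the paper: your fold $\phi$ is the paper's ``rectangular reduction,'' your case analysis on the unpaired central cell plays the role of the paper's contraction operation, and your corner-vertex subtraction with the recurrence $a_k = a_{k-1} + 2a_{k-2}$ is exactly how the paper passes from Möbius strips to Klein bottles via the Jacobsthal sequence. The only substantive work you defer is the derivation of the column-restricted rectangle counts (e.g.\ that exactly $2^k - 1$ Nurikabe rectangles have an all-water last column and $2^{k+1} - 2$ have exactly one water square there), which the paper establishes with its own linear recursion.
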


The enumeration $\accentset{\ocircle}{M}_{2k} = 5 \cdot 2^{k} - 3k - 4$ of even-length loop Nurikabe Möbius strips in \eqref{eq:main_loop} is identified with \cite[A213387]{oeis}, and the odd-length enumeration $\accentset{\ocircle}{M}_{2k+1} = 2^{k+3} - 3(k+2)$ is identified with \cite[A123203]{oeis}. Thus, \cref{thm:main} establishes novel links between these sequences, the sequence \cite[A101946]{oeis}, and the ubiquitous Jacobsthal sequence \cite[A001045]{oeis}. The loop Klein count in \eqref{eq:main_loop2}, either in entirety or restricting to even and odd indices, and the odd square enumeration $\accentset{\Box}{M}_{2k+1} = 9\cdot 2^k - 3k - 7$ in \eqref{eq:main_square} do not appear to have database matches. For concrete reference, the first $12$ terms of the loop Klein sequence are: 
\[
\left\{\accentset{\ocircle}{K}_n\right\}_{n=1}^{\infty} = \{1,3,6,7,15,22,36,55,85,120,182,257,\dots\}.
\]

\subsection{Further questions}

Naturally, one can ask for Nurikabe enumerations on surfaces of larger size. Even restricting to rectangles, the authors are unaware of enumerations in the literature beyond \eqref{eq:N_n} of \cite{goertz2024quaternarygraycodeused}. 

\begin{question}
What is the number $N_{3\times n}$ of Nurikabe rectangles of size $3\times n$? What about of size $n\times n$? What about Nurikabe Möbius strips of size $3\times n$ in terms of $N_{3\times n}$?
\end{question}

A different direction is to expand the class of surfaces on which Nurikabe is played. The annuli and tori constructed from tiled rectangular fundamental domains are examples of \textit{square-tiled surfaces}, which are further generalized by \textit{translation surfaces}. Briefly, a translation surface is obtained from a fundamental set of polygons in the plane, where all edge identifications are orientation-preserving along parallel edges of same length. A square-tiled surface is a translation surface where each polygon is a square; for an example, see \cref{fig:squaretile}. Translation surfaces have deep connections to Teichmüller theory, dynamics, and combinatorics, for instance playing a central role in the celebrated work of Eskin, Mirzakhani, and Mohammadi \cite{eskin2015isolation}. For more information, we point the reader to the broad introductory book of Forni and Matheus \cite{ForniMatheus2018} and the survey article of Wright \cite{wright2016rational}.

\begin{figure}[ht]
	\centering

	\begin{overpic}[scale=.36]{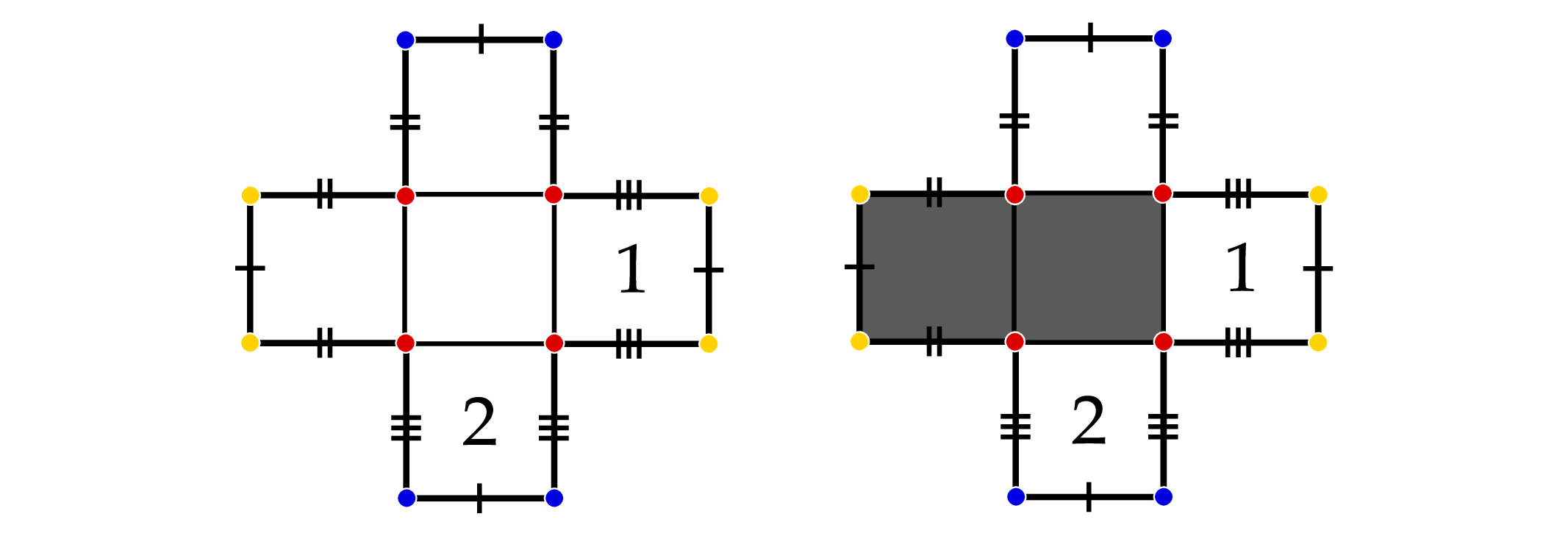}
        
	\end{overpic}
    
	\caption{A Nurikabe puzzle on a genus $2$ square-tiled surface (left) and its unique solution (right).}
	\label{fig:squaretile}
\end{figure}

The loop whirlpool Nurikabe rule set (which is evidently more interesting) extends naturally to square-tiled surfaces, and even to translation surfaces in general under reasonable reinterpretation. Beyond the basic enumeration problems as in the present paper, there are many questions which are natural to ask. To provide an example, note that in a $2\times n$ grid there are $2^{2n} = 4^n$ possible black/white colorings. As $n\to \infty$, \eqref{eq:N_n} implies that the proportion of Nurikabe grids among all colorings tends to $0$: $\frac{N_n}{4^n} \to 0$. More generally, for a square-tiled (or translation) surface $\Sigma$, let $N(\Sigma)$ denote the number of valid loop Nurikabe colorings based on $\Sigma$ and let $|\Sigma|$ denote the number of squares (or polygons) in $\Sigma$. 

\begin{question}
Does there exist a sequence of square-tiled or translation surfaces $\Sigma_{n}$ with $|\Sigma_n| \to \infty$ such that $\liminf \frac{N(\Sigma_n)}{2^{|\Sigma_n|}} > 0$?
\end{question}

\noindent Here the immediate obstruction appears to be orthogonal connectedness of the water. For instance, the staircase square-tiled surface in \cref{fig:staircase} has only one vertex, so it is impossible to have a loop whirlpool unless all squares are water; orthogonal connectedness however does not hold for a general set of water squares.  

\begin{figure}[ht]
	\centering

	\begin{overpic}[scale=.4]{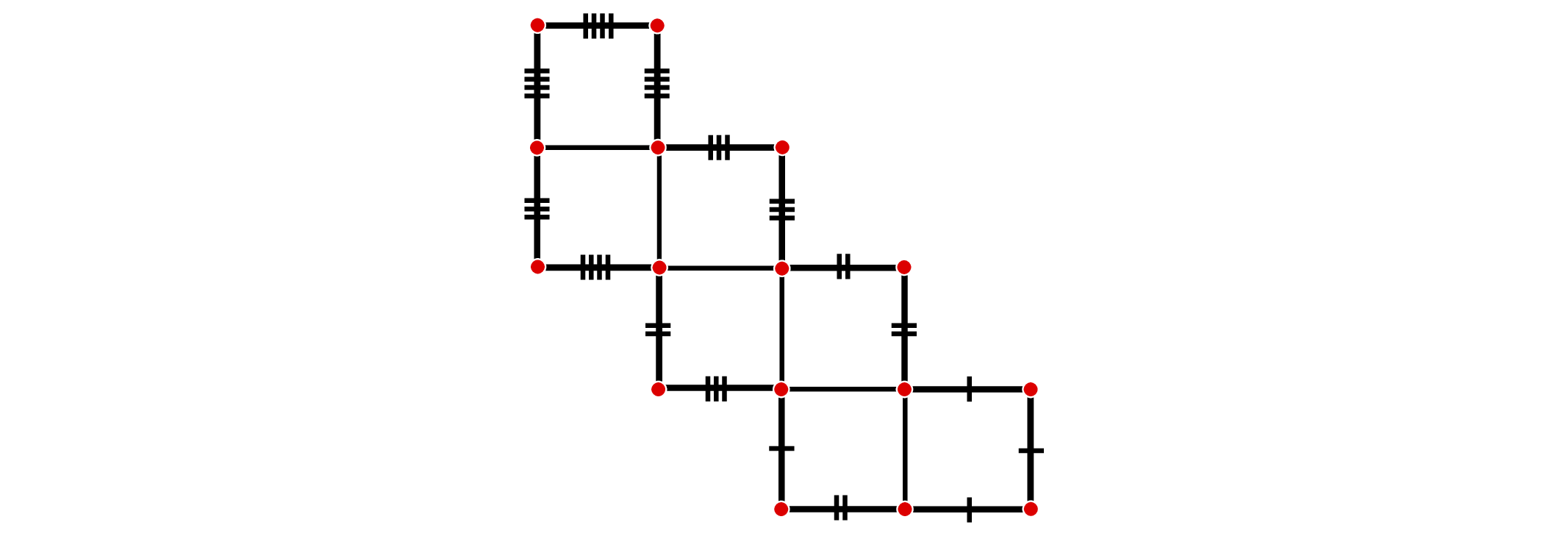}
        
	\end{overpic}
    
	\caption{A staircase square-tiled surface with one vertex (in red).}
	\label{fig:staircase}
\end{figure}

\subsection{Outline}

In \cref{sec:redcontr} we study orthogonal connectedness of water as in (N1), wh\-ich is indifferent to the choice of whirlpool rule. In the process of doing so we introduce certain cutting operations to reduce to $2\times k$ rectangles. These reductions require various refinements of $2\times k$ Nurikabe rectangle enumerations, which we establish in \cref{sec:refined}. We use the results from these sections to complete the proof of \cref{thm:main} in \cref{sec:square_whirl} and \cref{sec:loop_whirl}, where we consider (N2$\Box$) and (N2$\ocircle$) respectively.

\subsection{Conventions}\label{subsec:conventions}

We henceforth say \textit{(N1)-connected}, or simply \textit{connected} when clear, in place of \textit{orthogonally connected}. We use \textit{(square/loop) Nurikabe} as an adjective to indicate a surface with a coloring satisfying (N1) and (N2$\ast$) with $\ast\in \{\Box,\ocircle\}$ respectively, at times also speaking of \textit{Nurikabe validity} to mean the same. For example, a "loop Nurikabe Klein bottle" is an element $K\in \accentset{\ocircle}{\mathcal{K}}_n$, or we may conclude that a Möbius strip $M$ with a coloring of squares "is loop Nurikabe" if $M\in \accentset{\ocircle}{\mathcal{M}}_n$ or is "not loop Nurikabe valid" if $M\notin \accentset{\ocircle}{\mathcal{M}}_n$. In contrast, when referring the underlying square-tiled surface described by the canonical fundamental domains in \cref{fig:nurikabe3}, indifferent to a coloring or lack thereof, we will say, for example, "let $M$ be a $1\times n$ Möbius strip." 

Given a $2\times k$ fundamental domain, we index the columns from $1$ to $k$, ordered from left to right. On a $1\times n$ fundamental domain, we refer to the square in \textit{position (index)} $j$, or the $j$ \textit{square}, ordered from left to right. 

Finally, given a surface with square-tiled fundamental domain, we refer to its \textit{adjacency graph (of squares)} as the graph whose vertices correspond to distinct squares and whose edges indicate the presence of an adjacency between the corresponding squares, ignoring multiplicity and self-adjacencies. For example, see \cref{fig:adjgraph}. 

\begin{figure}[ht]
	\centering
	\begin{overpic}[scale=.44]{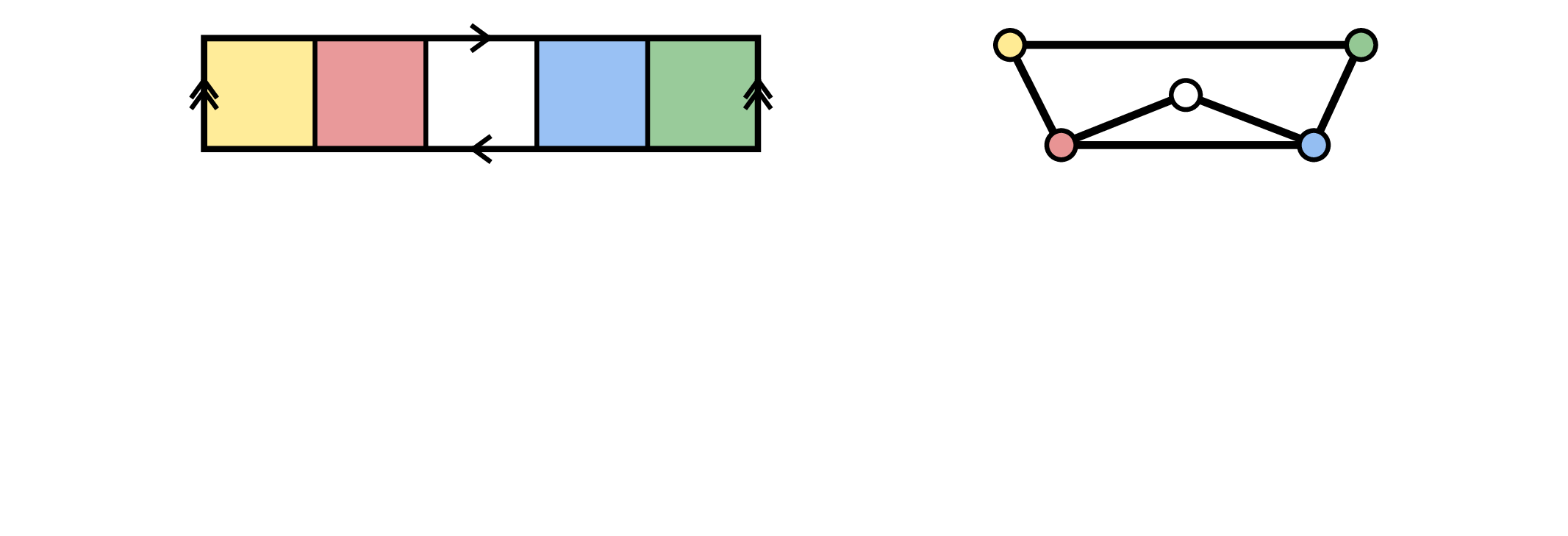}
        
	\end{overpic}
    \vskip-3.5cm
	\caption{A $1\times 5$ Klein bottle and its adjacency graph.}
	\label{fig:adjgraph}
\end{figure}
    
\subsection{Acknowledgments} The authors would like to thank Michael Dougherty for a helpful discussion, the suggestion to consider translation surfaces, and commenting on an early draft. We additionally thank an anonymous referee for multiple suggestions which improved the clarity of the article. 
\section{Reduction and contraction}\label{sec:redcontr}

We begin by studying (N1)-connectedness, which is insensitive to the choice of whirl\-pool rule (N2$\Box$) or (N2$\ocircle$). \cref{subsec:AandR} establishes \cref{lemma:orientable_rect} on rectangles and annuli. In \cref{subsec:mob_con} we consider even and odd Möbius strips separately using two cutting operations (\cref{lemma:connected_red} and \cref{lemma:connected_contr}). Finally in \cref{subsec:KP_con} we observe that (N1)-connectedness is equivalent on the three non-orientable surfaces; see \cref{lemma:KPM_connected}. Broadly speaking, this section establishes (N1)-connectedness as not troublesome in our setting and allows us to dedicate the rest of the article to whirlpools. 

\subsection{Rectangles and annuli}\label{subsec:AandR}

We first observe that a $2\times k$ domain is Nurikabe valid whether it is viewed as a rectangle or an annulus, independent of which whirlpool rule is used. In line with the notation introduced before \cref{thm:main}, let $\accentset{\ast}{\mathcal{A}}_k$ denote the set of $2\times k$ black and white fundamental domains satisfying (N1, N2$\ast$) after identifying the horizontal edges.

\begin{lemma}\label{lemma:orientable_rect}
On rectangles and annuli the Nurikabe rules (N2$\Box$) and (N2$\ocircle$) are equivalent. In particular, $\mathcal{A}_k := \accentset{\Box}{\mathcal{A}}_k = \accentset{\ocircle}{\mathcal{A}}_k$. Moreover, there is a natural bijection $\mathcal{N}_k \cong \mathcal{A}_k$.
\end{lemma}

\begin{proof}
That (N2$\Box$) and (N2$\ocircle$) are equivalent follows from the fact that, for an orientable surface obtained from a square-tiled rectangular fundamental domain, every interior vertex has square-degree $4$. In particular, an interior vertex is surrounded by water if and only if it is the center of a $2\times 2$ sub-grid of four distinct water squares. 

\begin{figure}[ht]
	\centering
    \vskip-0.2cm
	\begin{overpic}[scale=.4]{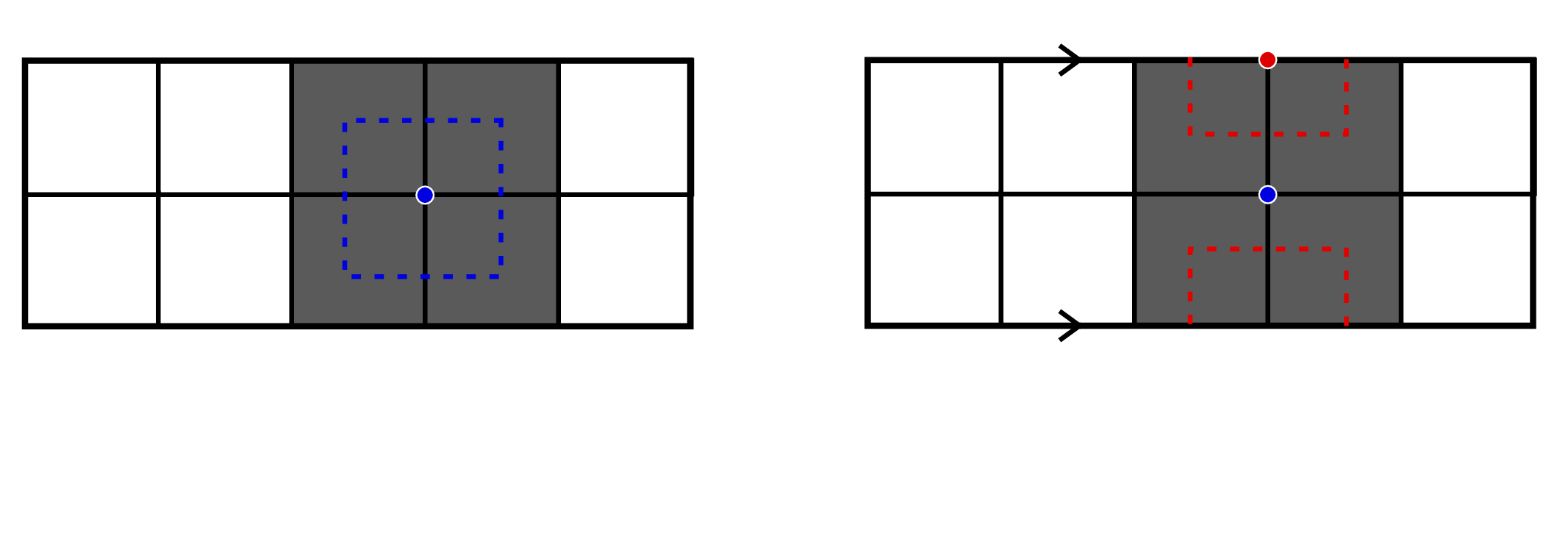}
        
	\end{overpic}
    \vskip-1.1cm
	\caption{On the left, an interior vertex (blue) of a $2\times 5$ rectangle surrounded by a $2\times 2$ whirlpool. Gluing the horizontal edges as on the right produces a new interior vertex (red) surrounded by water, but involving the same set of water squares.}
	\label{fig:annulus}
\end{figure}

Next consider a $2\times k$ rectangular grid populated by some water squares. We claim that the water is connected on the rectangle if and only if it is connected on the annulus obtained by identifying the horizontal edges. Indeed, a given square on the rectangle is adjacent to its neighbor in the same column; identifying the horizontals to produce the annulus has no effect to the adjacency graph. Likewise, identifying the horizontals creates no new $2\times 2$ sub-grids. While it does create additional interior vertices, the associated $2\times 2$ sub-grids coincide set-wise with sub-grids in the rectangle; see \cref{fig:annulus}. Thus, existence of whirlpools is equivalent on the rectangle or annulus. 
\end{proof}

\subsection{Möbius strips}\label{subsec:mob_con}

Next we study (N1)-connectedness on Möbius strips. We consider even and odd sizes separately, defining two respective cutting operations. 

\subsubsection{Rectangular reduction}\label{subsubsec:red}

Consider a $1\times 2k$ Möbius strip $M$. (Recall by \cref{subsec:conventions} that this means $M$ has a chosen fundamental domain as in \cref{fig:nurikabe3}.) By cutting the fundamental domain along the central vertical edge shared by the $k$ and $k+1$ squares, we may re-identify edges to obtain a $2\times k$ annulus as on the right side of \cref{fig:rectred}. By further cutting along the horizontal identification of the annulus, we obtain a $2\times k$ rectangle called the \textit{rectangular reduction} of $M$, denoted $\mathrm{red}(M)$.

\begin{figure}[ht]
	\centering
	\begin{overpic}[scale=.44]{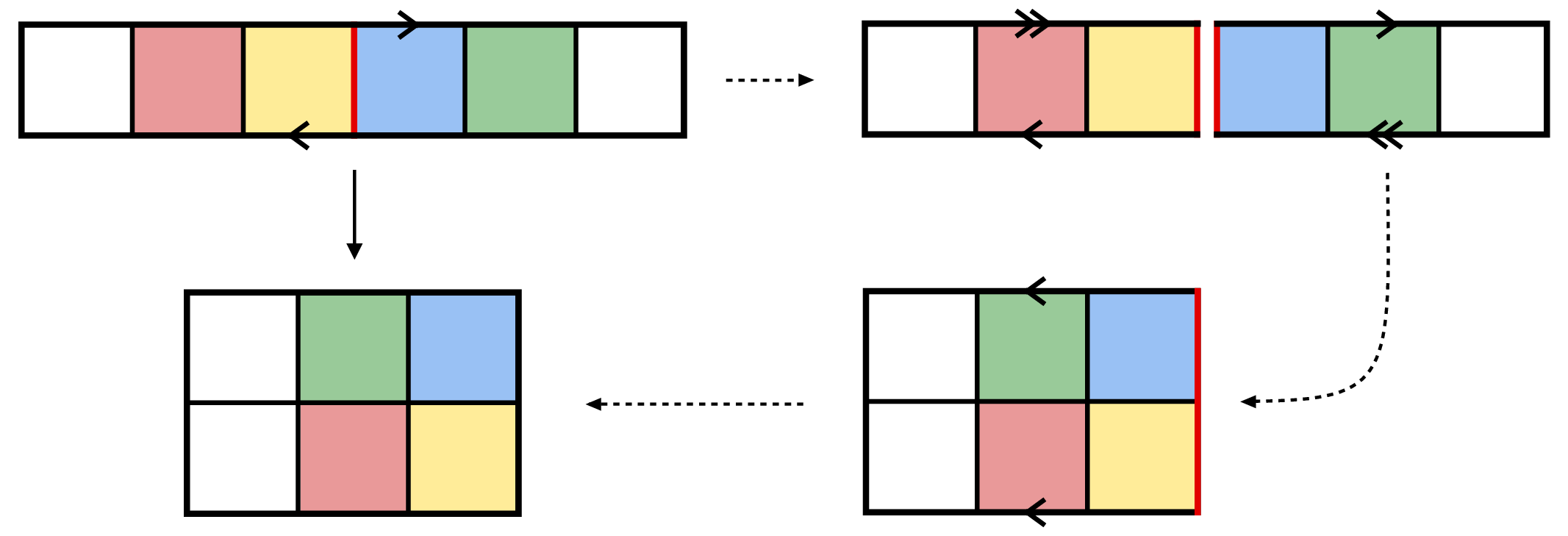}
        \put(24,20){\small $\mathrm{red}(\cdot)$}
        \put(47,30){\small cut}
        \put(88,8){\small re-identify}
	\end{overpic}
	\caption{Rectangular reduction applied to a $1\times 6$ Möbius strip, passing through an annulus by first cutting along the red edge. Squares are colored to clarify the adjacencies at each step.} 
	\label{fig:rectred}
\end{figure}

\begin{lemma}\label{lemma:connected_red}
Let $M$ be a $1\times 2k$ Möbius strip. A subset of squares is connected on $M$ if and only if its image in the rectangular reduction $\mathrm{red}(M)$ is connected.
\end{lemma}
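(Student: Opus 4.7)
The plan is to show that the cut-and-rearrange procedure defining $\mathrm{red}$ induces a bijection on squares that preserves the adjacency graph; equivalence of connectedness then follows immediately. First, I would enumerate the adjacencies of the $1 \times 2k$ Möbius strip $M$. Besides the $2k - 1$ horizontal adjacencies $\{j, j+1\}$ within the fundamental domain, the Möbius identification glues the top of square $j$ to the bottom of square $2k + 1 - j$, producing additional adjacencies $\{j, 2k+1-j\}$ for $j = 1, \ldots, k-1$ (the case $j = k$ reproduces the horizontal edge $\{k, k+1\}$, and any self-adjacency is ignored). This gives exactly $3k-2$ distinct adjacencies on $M$, matching the $3k-2$ edges of the ladder adjacency graph of a $2 \times k$ rectangle.

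Next, I would describe the bijection $\pi$ suggested by \cref{fig:rectred}: set $\pi(j) = (1, j)$ for $j = 1, \ldots, k$ (top row, left to right) and $\pi(j) = (2, 2k+1-j)$ for $j = k+1, \ldots, 2k$ (bottom row, in reverse order). A direct check verifies that $\pi$ sends each Möbius adjacency to the corresponding rectangle adjacency: horizontal edges within $\{1, \ldots, k\}$ become top-row edges and those within $\{k+1, \ldots, 2k\}$ become bottom-row edges; the center edge $\{k, k+1\}$ becomes the vertical edge in column $k$; and each top-bottom adjacency $\{j, 2k+1-j\}$ becomes the vertical edge in column $j$. Since both adjacency graphs have $3k-2$ edges, $\pi$ is a graph isomorphism, so a subset $S$ of squares is connected on $M$ if and only if $\pi(S)$ is connected on $\mathrm{red}(M)$.

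The main subtlety is the bookkeeping in the first paragraph: the Möbius identification contributes $k-1$ non-obvious top-bottom adjacencies beyond the horizontal ones. Overlooking them would leave the Möbius adjacency graph as a mere cycle $C_{2k}$, and the correspondence with $\mathrm{red}(M)$ would appear to introduce spurious vertical edges in columns $2, \ldots, k-1$; recognizing that these edges are already present in $M$ via the Möbius identification is the crux of the argument.
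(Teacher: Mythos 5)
Your proof is correct. The underlying idea is the same as the paper's---connectedness is preserved because the adjacency graph of squares is preserved---but your execution differs: you enumerate all $3k-2$ adjacencies of the Möbius strip explicitly (correctly accounting for the $k-1$ top--bottom adjacencies $\{j,2k+1-j\}$ coming from the reversed horizontal gluing, and for the coincidence at $j=k$), exhibit the square bijection $\pi$, and verify directly that it is an isomorphism onto the ladder graph of the $2\times k$ rectangle. The paper instead argues in two steps: cutting along the central vertical edge leaves the adjacency graph unchanged because the $k$ and $k+1$ squares remain vertically adjacent on the resulting annulus, and then connectedness on the annulus is equivalent to connectedness on the rectangle by \cref{lemma:orientable_rect}. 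Your version is self-contained and makes the (genuinely easy-to-miss) vertical adjacencies fully explicit, at the cost of some bookkeeping; the paper's version is shorter because it reuses the annulus lemma already in hand. One small caveat: you define $\pi$ as the map ``suggested by'' the figure rather than deriving it from the cut-and-reglue operation itself, so strictly speaking you should note that $\mathrm{red}$ induces exactly this bijection (or one differing from it by a symmetry of the rectangle, which does not affect connectedness); this is immediate but worth a sentence.
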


\begin{proof}
A set of squares on a $1\times 2k$ Möbius strip is connected if and only if it is connected on the annulus obtained by cutting along the central edge, as the $k$ and $k+1$ squares are also vertically adjacent; this cut therefore has no effect on the adjacency graph. By \cref{lemma:orientable_rect}, this is equivalent to connectedness on the induced $2\times k$ rectangle.
\end{proof}

\subsubsection{Contraction}\label{subsubsec:contr}

Consider a $1\times (2k+1)$ Möbius strip $M$. The \textit{contraction} of $M$, denoted $\mathrm{contr}(M)$, is the $1\times 2k$ Möbius strip obtained by excising the central square (in position $k+1$) from the fundamental domain and then gluing along the excision; see \cref{fig:contract}.

\begin{figure}[ht]
	\centering
	\begin{overpic}[scale=.44]{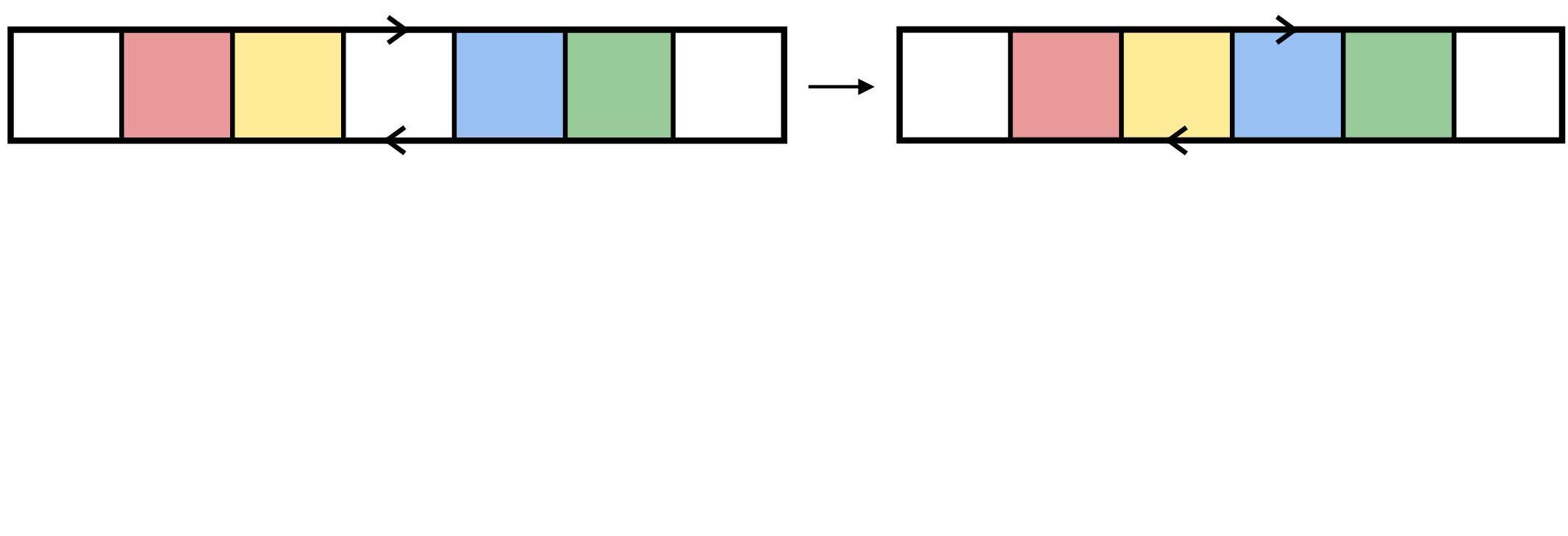}
      
	\end{overpic}
    \vskip-3.5cm
	\caption{Contraction of a $1\times 7$ Möbius strip to a $1\times 6$ Möbius strip.} 
	\label{fig:contract}
\end{figure}

\begin{lemma}\label{lemma:connected_contr}
Let $M$ be a $1\times (2k+1)$ Möbius strip. A subset of squares not containing the central $k+1$ square is connected on $M$ if and only if its image in $\mathrm{contr}(M)$ is connected. 
\end{lemma}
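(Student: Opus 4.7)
The plan is to verify directly that the adjacency graph of $M$, restricted to its non-central squares, coincides as an unlabeled graph with the full adjacency graph of $\mathrm{contr}(M)$ under the natural identification of squares (old $i \mapsto$ new $i$ for $i \le k$, old $i \mapsto$ new $i-1$ for $i \ge k+2$). Once that is established, connectedness of $S \subseteq \{1,\ldots,2k+1\}\setminus\{k+1\}$ on $M$ and connectedness of its image on $\mathrm{contr}(M)$ become literally the same statement about the same induced subgraph.

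First I would enumerate the adjacencies of $M$. The horizontal adjacencies are $i \sim i+1$ for $i = 1,\ldots,2k$. Per the gluing convention of \cref{fig:nurikabe3}, the Möbius identification glues the top edge of square $i$ to the bottom edge of square $2k+2-i$, so it contributes the further adjacencies $i \sim 2k+2-i$ for every $i \ne k+1$ (the case $i = k+1$ gives only a self-adjacency of the central square, which is ignored). In particular, the edge $k \sim k+2$ already holds in $M$ via the Möbius identification.

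Next I would enumerate the adjacencies of $\mathrm{contr}(M)$, which is a $1\times 2k$ Möbius strip. Its horizontal adjacencies $j \sim j+1$ translate, in old labels, to all non-central consecutive pairs together with the bridge $k \sim k+2$ created by excising the central square and re-gluing. Its Möbius adjacencies $j \sim 2k+1-j$ translate back to $i \sim 2k+2-i$ for each non-central $i$. Comparing the two lists edge-by-edge shows the adjacency graphs agree: every adjacency in $\mathrm{contr}(M)$ appears among the non-central adjacencies of $M$, and vice versa.

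The main point to get right is that the seemingly new bridge $k \sim k+2$ created by contraction is not actually new — it coincides with the Möbius adjacency already present in $M$ — so, ignoring multiplicity, the duplicated edge is still a single edge in the adjacency graph. Once this bookkeeping is in hand the induced subgraph on $S$ is the same in both descriptions, and the equivalence of connectedness follows immediately; the only real pitfall is convincing oneself that the relabeling sends the Möbius adjacencies of $\mathrm{contr}(M)$ back onto the Möbius adjacencies of $M$ and that the bridge contributes no edge beyond these.
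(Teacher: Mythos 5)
Your proof is correct and takes essentially the same approach as the paper's: both arguments enumerate the horizontal and vertical (gluing) adjacencies before and after contraction, verify that the relabeling carries one set onto the other, and observe that the only apparently new edge $k \sim k+2$ duplicates a vertical adjacency already present in $M$, so that the adjacency graph simply loses the central vertex and its incident edges. Nothing further is needed.
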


\begin{proof}
In $M$, the central $k+1$ square is self-adjacent in the vertical direction, and the square in position $1\leq j \leq k$ is vertically-adjacent to the square in position $2k+1 - (j-1)$. Contraction of the central square produces an even-length Möbius strip with vertical adjacencies of the square in positions $j'$ and $2k - (j'-1)$ for $1\leq j'\leq k$. Since contraction reduces by $1$ the position index of each square in $M$ of position $\geq k+1$, all vertical adjacencies are preserved. The only horizontal adjacency affected by contraction is that of the $k,k+2$ squares in $M$, which are already adjacent in the vertical direction. Thus, contraction affects the adjacency graph by deleting the vertex corresponding to the central $k+1$ square and its two incident edges.     
\end{proof}

For $\ast \in \{\Box,\ocircle\}$, decompose $\accentset{\ast}{\mathcal{M}}_{2k+1} = \accentset{\ast}{\mathcal{M}}_{2k+1,0} \cup \accentset{\ast}{\mathcal{M}}_{2k+1,1}$, where the first (resp. second) subset consists of $\ast$-Nurikabe Möbius strips whose central square is land (resp. water). 

\begin{lemma}\label{lemma:odd_M_both}
For $\ast \in \{\Box,\ocircle\}$, there is a bijection $\mathrm{red} \circ \mathrm{contr}: \accentset{\ast}{\mathcal{M}}_{2k+1,0} \to \mathcal{N}_k.$
\end{lemma}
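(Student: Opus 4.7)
The plan is to interpret $\Phi := \mathrm{red} \circ \mathrm{contr}$ as a map that excises the central land square of $M$ and rearranges the remaining $2k$ squares into a $2 \times k$ rectangle while preserving their individual colors. With this view, $\Phi$ is color-recoverable: the coloring of the $2k$ non-central squares of $M$ can be read off $\Phi(M)$, and the central square is forced to be land by the subscript $0$ in $\accentset{\ast}{\mathcal{M}}_{2k+1,0}$. Injectivity of $\Phi$ is then essentially automatic, and the real work is verifying that $\Phi$ lands in $\mathcal{N}_k$ and that every $R \in \mathcal{N}_k$ is realized.

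For well-definedness, let $M \in \accentset{\ast}{\mathcal{M}}_{2k+1,0}$ and set $R := \Phi(M)$. Connectedness of the water in $R$ follows by chaining \cref{lemma:connected_contr} (which applies because the central square of $M$ is land, so the water avoids it) with \cref{lemma:connected_red}. For the whirlpool rule, I would extend the positional analysis in the proofs of those lemmas from adjacencies to interior vertices: the correspondence between squares of $M$, squares of $\mathrm{contr}(M)$, and squares of $R$ shows that every degree-$4$ interior vertex of $R$ is surrounded by the same four squares as a corresponding degree-$4$ interior vertex of $M$ at a position $x \notin \{k, k+1\}$. Thus a $2\times 2$ whirlpool in $R$ lifts to a square whirlpool in $M$, contradicting $M \in \accentset{\Box}{\mathcal{M}}_{2k+1,0}$ in the $\ast = \Box$ case and contradicting $M \in \accentset{\ocircle}{\mathcal{M}}_{2k+1,0}$ in the $\ast = \ocircle$ case, since (N2$\ocircle$) is strictly stronger than (N2$\Box$).

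For surjectivity, given $R \in \mathcal{N}_k$, I would invert $\mathrm{red}$ to obtain a $1 \times 2k$ Möbius strip carrying the coloring of $R$, and then invert $\mathrm{contr}$ by inserting a land square at the central position $k+1$ to produce a candidate $M$ of size $1 \times (2k+1)$. The central square of $M$ is land by construction, and connectedness of the water in $M$ follows from the reverse implications in \cref{lemma:connected_red} and \cref{lemma:connected_contr}. For the whirlpool rule, the only interior vertices of $M$ that do not correspond to interior vertices of $R$ are the two degree-$3$ vertices at $x = k$ and $x = k+1$, each incident to the central land square, and therefore neither can be surrounded by water under either (N2$\Box$) or (N2$\ocircle$). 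The remaining interior vertices of $M$ have degree $4$ and inherit their non-whirlpool status from the corresponding degree-$4$ vertices of $R$, so $M \in \accentset{\ast}{\mathcal{M}}_{2k+1,0}$ and $\Phi(M) = R$.

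The main obstacle is the vertex bookkeeping in the whirlpool analysis. Unlike connectedness, the whirlpool rules depend on local information at each interior vertex, and contraction in the center of an odd Möbius strip both removes the two degree-$3$ vertices of $M$ (where the central land square a priori rules out any whirlpool) and creates a single new degree-$2$ vertex at the central position of $\mathrm{contr}(M)$. This latter vertex could in principle violate (N2$\ocircle$) in $\mathrm{contr}(M)$, but it is absorbed into a boundary configuration under $\mathrm{red}$ and therefore never obstructs $R \in \mathcal{N}_k$; making this absorption precise, and tracking the compatibility of vertex degrees through both cutting operations, is the step requiring the most care.
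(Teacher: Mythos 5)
Your proposal is correct and follows essentially the same route as the paper: connectedness via \cref{lemma:connected_red} and \cref{lemma:connected_contr}, whirlpools by tracking interior vertices through the cutting operations (with the two square-degree-$3$ vertices adjacent to the central land square being the only new ones in the surjectivity direction, and the intermediate degree-$2$ vertex of $\mathrm{contr}(M)$ being harmless after $\mathrm{red}$), and injectivity by construction. The paper packages the "absorption" step you flag as the main obstacle into \cref{lemma:orientable_rect}, which shows the degree-$4$ vertices created by the annulus gluing are surrounded by the same square sets as vertices of the rectangle.
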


\begin{proof}
To see that the image of $\mathrm{red} \circ \mathrm{contr}$ indeed lands in $\mathcal{N}_k$, \cref{lemma:connected_contr} reduces to checking that no $\ast$-whirlpools are created under the composition of contraction and reduction. This is clear from the fact that no interior vertices are created. The map is injective by construction, so we only need to verify surjectivity. 

Let $R\in \mathcal{N}_k$. Let $M$ denote the Möbius strip obtained by reversing reduction on $R$ and then reversing contraction with a central land square. As the water is connected in $R$, \cref{lemma:connected_contr} implies that the water is connected on $M$. \cref{lemma:orientable_rect} implies that no whirlpools (of either kind) exist on the annulus obtained from $R$. Therefore, the only way a whirlpool could appear in $M$ is as a loop whirlpool surrounding one of the interior vertices adjacent to the central square; all other interior vertices share the behavior of interior vertices on the annulus. As the central square is land, no  whirlpools are created and thus $M\in \accentset{\ast}{\mathcal{M}}_{2k+1,0}$. Therefore, we have a bijection $\accentset{\ast}{\mathcal{M}}_{2k+1,0} \to \mathcal{N}_k$.     
\end{proof}

\subsection{Klein bottles and projective planes}\label{subsec:KP_con}

Finally, we show that connectedness of water is insensitive to the difference between the three non-orientable surfaces of size $1\times n$. 

\begin{lemma}\label{lemma:KPM_connected}
Consider a $1\times n$ fundamental domain representing either a Möbius strip, Klein bottle, or a projective plane. A subset of squares is either connected on all three surfaces or disconnected on all three surfaces.   
\end{lemma}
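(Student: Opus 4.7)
The plan is to show directly that the three surfaces in question share the same (simple) adjacency graph of squares, whereupon connectedness of any subset of squares is automatically equivalent on all three (per the Conventions section).

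First I will read off the adjacencies common to all three surfaces from the gluing conventions in \cref{fig:nurikabe3}. All three identify the top and bottom long edges of the $1\times n$ fundamental domain with a flip; in coordinates, via $(x,1)\sim(n-x,0)$. This produces the \emph{vertical} adjacencies $j\sim n+1-j$ for each $1\le j\le n$ with $j\ne n+1-j$; when $n$ is odd there is additionally a self-adjacency at the central square $j=(n+1)/2$, which is ignored in the simple adjacency graph. The neighbor \emph{horizontal} adjacencies $i\sim i+1$ for $1\le i\le n-1$ are of course also common to all three. This can be cross-checked against the vertical adjacencies established in the proof of \cref{lemma:connected_contr} for Möbius strips.

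Next I will account for the differences. The Möbius strip has no further identifications, since its left and right short edges form its boundary. The Klein bottle additionally identifies the short edges without a flip, and the projective plane does so with a flip; in either case the left edge of square $1$ becomes identified with the right edge of square $n$, so both surfaces acquire an additional \emph{horizontal} adjacency $1\sim n$. The key observation is that this additional edge is already present in the common list: taking $j=1$ in $j\sim n+1-j$ yields precisely $1\sim n$, so the extra edge is redundant in the simple adjacency graph. Consequently the three simple adjacency graphs coincide, and connectedness of any subset of squares is shared across the three surfaces.

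The only mildly subtle point, and the one I expect to be the main obstacle to write cleanly, is verifying that the flip on the left--right identification (distinguishing the projective plane from the Klein bottle) does not alter which pairs of squares become adjacent. The flip reorders the point-level corner identifications along that edge, but the two squares meeting along the glued edge remain $1$ and $n$ in both cases. Once this is noted, the redundancy of $1\sim n$ with the vertical adjacency of the same pair makes the lemma essentially immediate.
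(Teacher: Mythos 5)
Your proposal is correct and follows essentially the same route as the paper: the paper's proof likewise observes that the additional short-edge identification on the Klein bottle or projective plane only makes squares $1$ and $n$ adjacent, which is redundant since they are already vertically adjacent on the Möbius strip, so the adjacency graph is unchanged. Your explicit listing of the adjacencies $j\sim n+1-j$ and $i\sim i+1$, and the remark that the flip direction on the short edges is immaterial at the level of which squares meet, are just a more detailed spelling-out of the same argument.
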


\begin{proof}
Consider a $1\times n$ Möbius strip. The further identification of the vertical sides to produce a Klein bottle or projective plane does not change the adjacency graph; the vertical edge, once identified, is shared by squares $1$ and $n$, but these are already adjacent in the vertical direction on the Möbius strip. Therefore, connectedness of a subset of squares is independent of whether the fundamental domain is glued to a Möbius strip, Klein bottle, or projective plane.     
\end{proof}

\section{Refined rectangle counts}\label{sec:refined}

The operations in the previous section allow us to reduce to studying $2\times k$ rectangles. For this we need refinements of the enumeration $N_k$ from \eqref{eq:N_n}.

\subsection{Rightmost column restriction}\label{subsec:refined_count}

For $k\geq 1$ and $i\in \{0,1,2\}$, let $\mathcal{N}_{k,i}$ denote the set of $2\times k$ Nurikabe rectangles such that precisely $i$ squares in column $k$ are water. Denote $N_{k,i} = |\mathcal{N}_{k,i}|$.

\begin{proposition}\label{prop:a_n}
For $k\geq 1$, we have $N_{k,1} = 2^{k+1} - 2$ and $N_{k,2} = N_{k-1,1} + 1 = 2^k - 1$.    
\end{proposition}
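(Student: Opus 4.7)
The plan is to prove both identities simultaneously by induction on $k$, using case analyses on the state of column $k-1$ to derive simple recurrences.

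For $N_{k,2} = N_{k-1,1} + 1$ (with base case $N_{1,2} = 1$ verified directly), I would fix $R \in \mathcal{N}_{k,2}$ and inspect column $k-1$. Since column $k$ is fully water, column $k-1$ cannot also be fully water lest a $2 \times 2$ whirlpool form, so column $k-1$ has $0$ or $1$ water squares. If column $k-1$ is all land, then the water in the first $k-1$ columns must be empty (else it is disconnected from column $k$'s water, since column $k-1$ blocks all adjacencies), giving a single configuration. If column $k-1$ has exactly one water square, the first $k-1$ columns form an element of $\mathcal{N}_{k-1,1}$, and conversely every such element extends uniquely by appending a fully-water column $k$: connectedness is preserved since column $k-1$'s water touches column $k$, and no new whirlpool is introduced. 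Summing gives the identity.

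For $N_{k,1}$, top-bottom symmetry reduces the count to twice the number of $R \in \mathcal{N}_{k,1}$ with column $k$ equal to $(\mathrm{water}, \mathrm{land})$. Inspecting column $k-1$ yields four cases: if it is all land, the first $k-1$ columns must be all land as well (otherwise the lone water square $(k,\mathrm{top})$ is isolated from the rest of the water), giving $1$ configuration; if column $k-1$ has only top water, the first $k-1$ columns form an element of $\mathcal{N}_{k-1,1}$ with top water in column $k-1$, of which there are $N_{k-1,1}/2$ by symmetry; if column $k-1$ has only bottom water, the square $(k,\mathrm{top})$ is again isolated and this case contributes $0$; if column $k-1$ is all water, the first $k-1$ columns form an element of $\mathcal{N}_{k-1,2}$, and every such extends validly since column $k$ has only one water square. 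Summing and doubling yields $N_{k,1} = 2 + N_{k-1,1} + 2N_{k-1,2}$.

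Substituting the first identity into this recurrence gives $N_{k,1} = N_{k-1,1} + 2N_{k-2,1} + 4$ for $k \geq 3$, with base cases $N_{1,1} = 2$ and $N_{2,1} = 6$ verified by hand. A routine induction then confirms $N_{k,1} = 2^{k+1} - 2$, whence $N_{k,2} = 2^k - 1$. The main point of care throughout is the restriction/extension bijection between $\mathcal{N}_{k,i}$ and (subsets of) $\mathcal{N}_{k-1,j}$: one must check that restricting preserves connectedness of water (which forced the rejection of the disconnecting sub-cases above) and that extending never creates a new $2 \times 2$ whirlpool (automatic here, since in each surviving case either column $k$ or column $k-1$ has at most one water square).
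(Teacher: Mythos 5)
Your argument is correct and follows essentially the same route as the paper: the identical case analysis on column $k-1$ (via column reduction/extension) yields the recurrences $N_{k,1} = 2 + N_{k-1,1} + 2N_{k-1,2}$ and $N_{k,2} = 1 + N_{k-1,1}$, hence $N_{k,1} = N_{k-1,1} + 2N_{k-2,1} + 4$. The only difference is cosmetic: the paper solves this recurrence by passing to the third-order homogeneous form $a_k = 2a_{k-1} + a_{k-2} - 2a_{k-3}$ and diagonalizing the companion matrix, whereas you verify the closed form $2^{k+1}-2$ directly by induction, which is equally valid.
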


To prove \cref{prop:a_n}, we appeal to a recursive description of the sequence. 

\begin{lemma}\label{lemma:a_n}
For $k\geq 4$, the sequence $a_k := N_{k,1}$ satisfies
\[
\begin{cases}
a_k = 2a_{k-1} + a_{k-2} - 2a_{k-3} \\
a_3 = 14 \\
a_2 = 6 \\
a_1 = 2
\end{cases}.
\]
\end{lemma}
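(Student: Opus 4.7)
The plan is to derive a simpler two-step recurrence for $a_k$ by conditioning on column $k-1$, and then obtain the claimed three-step recurrence by differencing consecutive instances. First, the vertical reflection symmetry of the Nurikabe rules lets me write $a_k = 2 W_k$, where $W_k$ counts rectangles in $\mathcal{N}_{k,1}$ whose column $k$ is WL (water on top, land on bottom). It is convenient to track $c_k := N_{k,2}$ alongside.

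To obtain a recurrence for $W_k$, I condition on column $k-1$. The cell $(1,k)$ is water while $(2,k)$ is land, so $(1,k)$ reaches the rest of the water only through $(1,k-1)$. Either $(1,k-1)$ is land, in which case connectedness forces all squares in columns $1,\dots,k-1$ to be land ($1$ configuration), or $(1,k-1)$ is water, in which case the restriction to columns $1,\dots,k-1$ is itself a valid $2\times (k-1)$ Nurikabe rectangle whose column $k-1$ has top water: no whirlpool is created at the seam because $(2,k)$ is land, and attaching the single cell $(1,k)$ to $(1,k-1)$ does not affect connectedness. The column $k-1$ configurations with top water are exactly WL and WW, contributing $W_{k-1} + c_{k-1}$, so $W_k = 1 + W_{k-1} + c_{k-1}$, i.e.
\[
a_k \;=\; 2 + a_{k-1} + 2\, c_{k-1}.
\]
An entirely parallel case analysis for $c_k$, using that column $k-1$ cannot be WW when column $k$ is WW (else a whirlpool), gives $c_k = 1 + a_{k-1}$.

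Substituting $c_{k-1} = 1 + a_{k-2}$ into the first identity produces $a_k = 4 + a_{k-1} + 2 a_{k-2}$ for $k \geq 3$. Differencing this with its shift $a_{k-1} = 4 + a_{k-2} + 2 a_{k-3}$ cancels the constant and yields precisely $a_k = 2 a_{k-1} + a_{k-2} - 2 a_{k-3}$ for $k \geq 4$. The initial values $a_1 = 2$, $a_2 = 6$, $a_3 = 14$ follow by direct inspection of small cases (or by bootstrapping the two-step identity $a_k = 4 + a_{k-1} + 2a_{k-2}$ from $a_1 = 2$ and $a_2 = 6$).

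The main obstacle, though largely notational, is the bookkeeping in the case analysis: confirming precisely that connectedness of the water in columns $1,\dots,k-1$ agrees with connectedness on the full rectangle in each subcase, and checking that the no-whirlpool constraint at the $(k-1,k)$ seam is captured by the single rule "column $k-1$ is not WW when column $k$ is WW." Once those two observations are in hand, the rest is routine algebraic manipulation.
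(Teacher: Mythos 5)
Your proposal is correct and follows essentially the same route as the paper: it establishes the transfer relations $N_{k,1} = 2 + N_{k-1,1} + 2N_{k-1,2}$ and $N_{k,2} = 1 + N_{k-1,1}$ by conditioning on the penultimate column, combines them into the nonhomogeneous recurrence $a_k = a_{k-1} + 2a_{k-2} + 4$, and differences to obtain the stated third-order homogeneous recursion. The only (harmless) cosmetic difference is your use of the vertical reflection symmetry to work with $W_k = a_k/2$ rather than with $a_k$ directly.
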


With \cref{lemma:a_n}, the proof of \cref{prop:a_n} is a standard diagonalization argument. 

\begin{proof}[Proof of \cref{prop:a_n}.]
Observe that 
\[
\begin{pmatrix}
    a_k \\
    a_{k-1} \\
    a_{k-2}
\end{pmatrix} = \begin{pmatrix}
    2 & 1 & -2 \\
    1 & 0 & 0 \\
    0 & 1 & 0 
\end{pmatrix}\begin{pmatrix}
    a_{k-1}\\
    a_{k-2} \\
    a_{k-3}
\end{pmatrix}, \quad \text{hence} \quad \begin{pmatrix}
    a_k \\
    a_{k-1} \\
    a_{k-2}
\end{pmatrix} = \begin{pmatrix}
    2 & 1 & -2 \\
    1 & 0 & 0 \\
    0 & 1 & 0 
\end{pmatrix}^{k-3}\begin{pmatrix}
    14\\
    6 \\
    2
\end{pmatrix}.
\]
Diagonalizing the matrix gives 
\begin{align*}
\begin{pmatrix}
    a_k \\
    a_{k-1} \\
    a_{k-2}
\end{pmatrix} &= \frac{1}{6}\begin{pmatrix}
    4 & 1 & 1 \\
    2 & 1 & -1 \\
    1 & 1 & 1
\end{pmatrix}\begin{pmatrix}
    2 & 0 & 0 \\
    0 & 1 & 0 \\
    0 & 0 & -1 
\end{pmatrix}^{k-3}\begin{pmatrix}
    2 & 0 & -2 \\
    -3 & 3 & 6 \\
    1 & -3 & 2 
\end{pmatrix}\begin{pmatrix}
    14\\
    6 \\
    2
\end{pmatrix} \\
&= \begin{pmatrix}
    4 & 1 & 1 \\
    2 & 1 & -1 \\
    1 & 1 & 1
\end{pmatrix}\begin{pmatrix}
    2^{k-3} & 0 & 0 \\
    0 & 1 & 0 \\
    0 & 0 & (-1)^{k-3} 
\end{pmatrix}\begin{pmatrix}
    4\\
    -2 \\
    0
\end{pmatrix} \\
&= \begin{pmatrix}
    4 & 1 & 1 \\
    2 & 1 & -1 \\
    1 & 1 & 1
\end{pmatrix}\begin{pmatrix}
    2^{k-1}\\
    -2 \\
    0
\end{pmatrix}.
\end{align*}
Extracting the leading entry of the resulting vector gives $a_k = 2^{k+1} - 2$. 
\end{proof}

It remains to identify the recursive description of $N_{k,1}$ in \cref{lemma:a_n}.

\begin{proof}[Proof of \cref{lemma:a_n}.]
The initial condition $a_1 = 2$ is immediate. The counts $a_2 = 6$ and $\frac{1}{2}a_3 = 7$ are provided in \cref{fig:a_n_initial}; reflective symmetry across the horizontal axis provides the other half of $a_3 = 14$. 

\begin{figure}[ht]
	\centering
	\begin{overpic}[scale=.4]{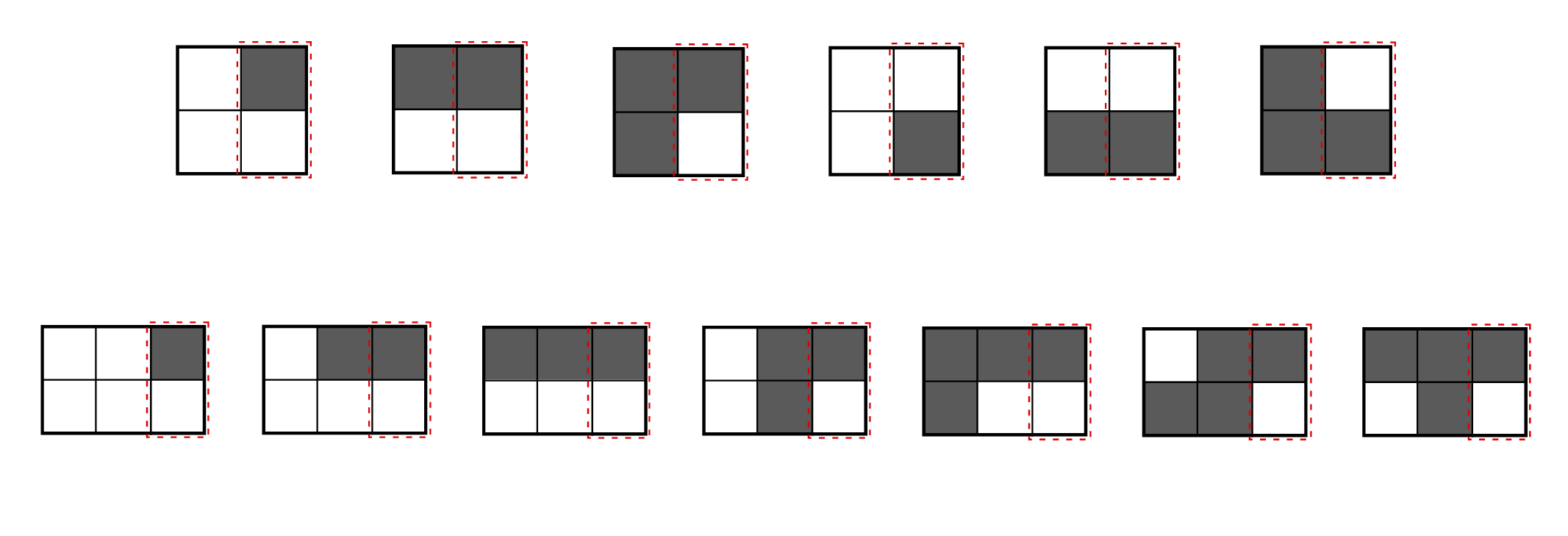}
        
	\end{overpic}
    \vskip-0.5cm
	\caption{The top row depicts $a_2 = 6$, and the bottom row depicts $\frac{1}{2}a_3 = 7$.}
	\label{fig:a_n_initial}
\end{figure}

Next, we claim that $a_k$ satisfies the nonhomogeneous second-order recursion  
\begin{equation}\label{eq:a_n_second_order}
a_k = a_{k-1} + 2a_{k-2} + 4.
\end{equation}
The desired third-order homogeneous recursion follows from the difference of successive terms:
\begin{align*}
    a_k - a_{k-1} &= (a_{k-1} + 2a_{k-2} + 4) - (a_{k-2} + 2a_{k-3} + 4) \\
    &= a_{k-1} + a_{k-2} - 2a_{k-3}.
\end{align*}
hence $a_k = 2a_{k-1} + a_{k-2} - 2a_{k-3}$. We dedicate the rest of the proof to establishing \eqref{eq:a_n_second_order}. 

Consider a Nurikabe rectangle in $\mathcal{N}_{k,1}$, assuming without loss of generality that $k\geq 3$. By definition, column $k$ has precisely one water square. Note that deleting column $k$ gives an element --- called the \textit{column reduction} --- of $\mathcal{N}_{k-1}$, as such a column deletion creates no whirlpools and cannot destroy connectedness of the water. Conversely, an invalid $2\times (k-1)$ rectangle cannot extend to an element of $\mathcal{N}_{k,1}$, as appending a rightmost column cannot connect disconnected water or destroy a whirlpool. 

By connectedness of the water on the $2\times k$ rectangle, the water square in column $k$ is either the only water square, or column $k-1$ has at least one water square. The former case accounts for $2$ elements of $\mathcal{N}_{k,1}$, as there are two possibilities for which of the two rightmost squares is water. In the latter case, if column $k-1$ has precisely one water square, it must be adjacent to the water square in column $k$ to preserve connectivity; on the other hand, each rectangle in $\mathcal{N}_{k-1,2}$ has two preimages in $\mathcal{N}_{k,1}$ under column reduction. Thus 
\begin{equation}\label{eq:loop1}
N_{k,1} = 2 + N_{k-1,1} + 2N_{k-1,2}.    
\end{equation}
Next we analyze the $N_{k-1,2}$ term. As both squares in column $k-1$ of a rectangle in $\mathcal{N}_{k-1,2}$ are water, connectedness of the water means they are either the only water squares, or column $k-2$ has precisely one water square, lest there be a whirlpool. As an invalid $2\times (k-2)$ rectangle cannot extend to a valid element of $\mathcal{N}_{k-1,2}$, we have 
\begin{equation}\label{eq:loop2}
   N_{k-1,2} = 1 + N_{k-2,1}.
\end{equation}
Note that \eqref{eq:loop2} gives the first equality of the second claim in the statement of the proposition. Combining \eqref{eq:loop1} and \eqref{eq:loop2}, we therefore have
\[
N_{k,1} = 2 + N_{k-1,1} + 2(1 + N_{k-2,1}) = 4 + N_{k-1,1} + 2N_{k-2,1}
\]
which is the desired recursion \eqref{eq:a_n_second_order}. This completes the proof.
\end{proof}

\subsection{Bicolumn restriction}

For $k\geq 2$ and $i,j\in\{0,1,2\}$, let $_j\mathcal{N}_{k,i}$ denote the set of $2\times k$ Nurikabe rectangles such that $i$ squares in column $k$ are water and $j$ squares in column $1$ are water. Let $_jN_{k,i}:= |_j\mathcal{N}_{k,i}|$.

\begin{lemma}\label{lemma:jacobsthal}
For $k\geq 2$ we have $_2N_{k+1,2} = \,_2N_{k,1} = 2J_k$ where $J_k = \frac{2^k - (-1)^k}{3}$ is the Jacobsthal sequence.     
\end{lemma}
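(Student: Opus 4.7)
The plan is to separate the two equalities: first build an explicit bijection $_2\mathcal{N}_{k+1,2} \cong {_2\mathcal{N}_{k,1}}$ by column deletion, then identify $_2N_{k,1}$ with $2J_k$ by extracting the Jacobsthal recurrence from a refined column-reduction analysis analogous to that used for \cref{prop:a_n}.

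For the bijection, given $R\in {_2\mathcal{N}_{k+1,2}}$ I would first observe that column $k$ must contain \emph{exactly} one water square: at most one because otherwise columns $k,k+1$ form a $2\times 2$ whirlpool, and at least one because column $k+1$ is entirely water and must be connected to the water in column $1$ through column $k$. Deleting column $k+1$ then produces an element of $_2\mathcal{N}_{k,1}$, and the inverse map sends $S\in {_2\mathcal{N}_{k,1}}$ to the rectangle obtained by appending a rightmost column of two water squares. Validity is preserved in both directions because column $k$ still carries only one water square (ruling out the whirlpool between columns $k$ and $k+1$) and because the two new water squares in column $k+1$ are mutually adjacent and adjacent to column $k$'s water square (preserving connectivity). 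Column $1$ is untouched, so the $j=2$ condition persists.

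For the recurrence, I would set $b_k := {_2N_{k,1}}$ and $c_k := {_2N_{k,2}}$ and perform column reduction on column $k$. For $b_k$, connectivity forces column $k-1$ to contain at least one water square: if it contains exactly one, that water must lie in the same row as column $k$'s single water, yielding a bijection with $_2\mathcal{N}_{k-1,1}$; if it contains two, either row for column $k$'s water works, giving two preimages per element of $_2\mathcal{N}_{k-1,2}$. For $c_k$, column $k-1$ must contain exactly one water square (at most one by (N2$\Box$), at least one by connectivity), yielding a bijection with $_2\mathcal{N}_{k-1,1}$. This gives the coupled recursions
\[
b_k = b_{k-1} + 2c_{k-1}, \qquad c_k = b_{k-1},
\]
and substitution yields the Jacobsthal recurrence $b_k = b_{k-1} + 2b_{k-2}$. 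I would then match initial conditions (e.g.\ $b_2 = 2$, $c_2 = 0$) and conclude via standard diagonalization or direct verification that $b_k = 2J_k$, with the closed form following from the stated formula for $J_k$.

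The main obstacle is purely bookkeeping: one must verify at each column-deletion/extension step that all three Nurikabe constraints (N1), (N2$\Box$), and the specified column $1$ and column $k$ water counts are preserved. The key point that keeps the argument clean is that in every case the forced water content in column $k-1$ (or the unforced but controlled content in column $k+1$) is precisely what prevents whirlpools from being created and connectivity from being broken, so once the correct case analysis is set up, each verification is immediate.
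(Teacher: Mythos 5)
Your proposal follows essentially the same route as the paper: the first equality comes from deleting the all-water last column (connectivity plus the whirlpool rule force the adjacent column to contain exactly one water square, so the deletion lands in ${}_2\mathcal{N}_{k,1}$ and is reversible), and the closed form is extracted from the recurrence $b_k = b_{k-1} + 2b_{k-2}$ obtained by column reduction. Your coupled system $b_k = b_{k-1} + 2c_{k-1}$, $c_k = b_{k-1}$ is just a repackaging of the paper's substitution of ${}_2N_{k-1,2} = {}_2N_{k-2,1}$ into ${}_2N_{k,1} = {}_2N_{k-1,1} + 2\,{}_2N_{k-1,2}$, and your case analysis for the recurrence matches the paper's.

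The one genuine problem is the step you defer: ``conclude via standard diagonalization or direct verification that $b_k = 2J_k$.'' With the initial conditions you correctly compute ($b_2 = 2$, $c_2 = 0$, hence $b_3 = 2$), the recurrence $b_k = b_{k-1} + 2b_{k-2}$ solves to $b_k = \tfrac{2^k + 2(-1)^k}{3} = 2J_{k-1}$, not $2J_k$. Concretely, ${}_2N_{3,1} = 2$ by direct enumeration (column $2$ must carry exactly one water square, in the same row as the lone water square of column $3$), whereas $2J_3 = 6$; similarly ${}_2N_{4,1} = 6 = 2J_3 \neq 2J_4 = 10$. So the identity your argument actually establishes is ${}_2N_{k+1,2} = {}_2N_{k,1} = 2J_{k-1}$. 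The paper's own proof records the same data ($b_2 = b_3 = 2$ and the same recurrence) and then asserts the closed form $2J_k$, so this index shift is a discrepancy with the lemma as stated rather than a defect of your method --- but as written, the ``direct verification'' you propose would fail, and you should either correct the target closed form or recheck the initial conditions before signing off.
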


\begin{proof}
Let $R\in \,_2\mathcal{N}_{k+1,2}$. As the water is connected, column $k$ must contain at least one water square. The lack of whirlpools implies that in fact there must be precisely one water square. Thus, column reduction gives a bijection $_2\mathcal{N}_{k+1,2} \to \,_2\mathcal{N}_{k,1}$.

Now let $b_k := \,_2N_{k,1}$ for $k\geq 2$. By counting directly, we see $b_2 = b_3 = 2$. We claim that $b_k$ satisfies the linear recurrence 
\begin{equation}\label{eq:jacobsthal}
b_k = b_{k-1} + 2b_{k-2}.    
\end{equation}
This recurrence with the above initial conditions implies $b_k = 2J_k$, where $J_k$ is the Jacobsthal sequence with known closed form as indicated; see \cite[A001045]{oeis}. To see \eqref{eq:jacobsthal}, note that for rectangles in $_2\mathcal{N}_{k,1}$ and $k\geq 4$, connectedness of the water implies there are two possibilities for column $k-1$: either this column has precisely one water square adjacent to the water square in column $k$, or both squares in the column are water. In the former case, column reduction gives a bijection of such rectangles with $_2\mathcal{N}_{k-1,1}$; in the latter case, column reduction gives a $2$-to-$1$ map of such rectangles onto $_2\mathcal{N}_{k-1,2}$; see \cref{fig:left2}. Consequently we have $_2N_{k,1} =\, _2N_{k-1,1} + 2 (_2N_{k-1,2})$. By the first paragraph of the proof, $_2N_{k-1,2} =\, _2N_{k-2,1}$ and thus 
\[
_2N_{k,1} =\, _2N_{k-1,1} + 2 (_2N_{k-2,1})
\]
which is precisely \eqref{eq:jacobsthal}.
\end{proof}

\begin{figure}[ht]
	\centering
	\begin{overpic}[scale=.44]{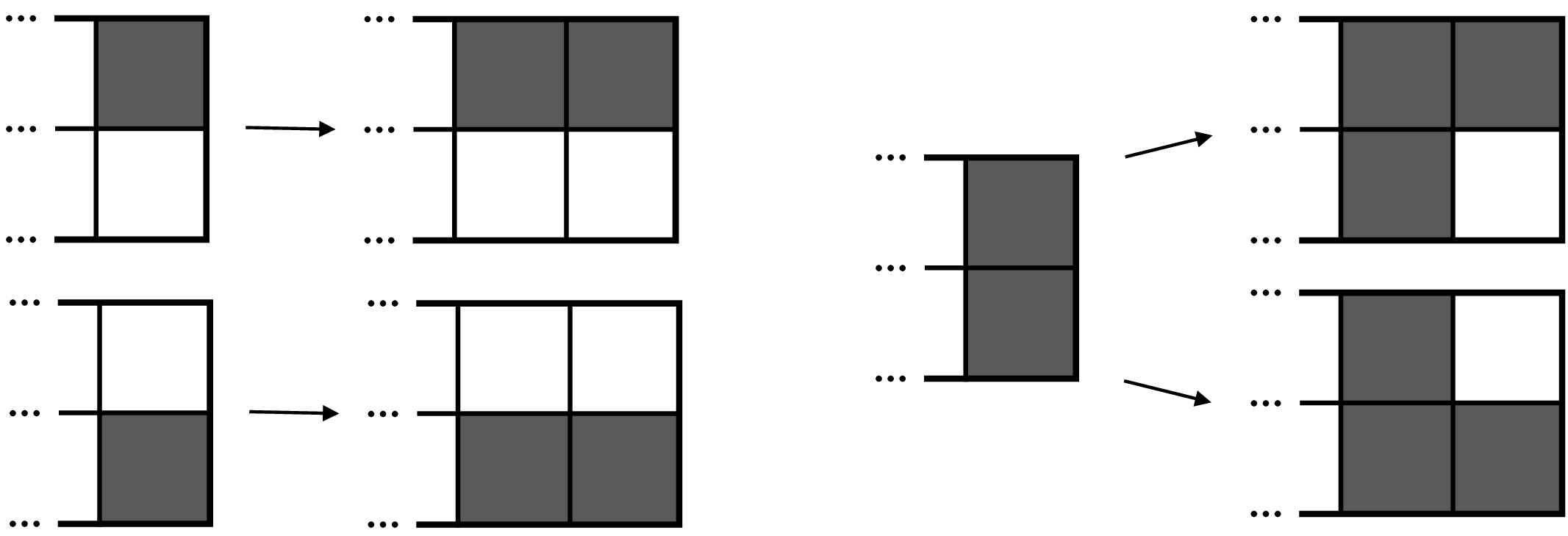}
      
	\end{overpic}
    
	\caption{Identifying the recurrence in \cref{lemma:jacobsthal}.} 
	\label{fig:left2}
\end{figure}
\section{Square whirlpool Nurikabe}\label{sec:square_whirl}

Here we establish \eqref{eq:main_square} of \cref{thm:main}. We first show that with the square whirlpool rule, our desired count is independent of the choice of non-orientable surface and thus it suffices to consider Möbius strips. 

\begin{lemma}
    For $n\geq 1$, we have $\accentset{\Box}{M}_n = \accentset{\Box}{K}_n = \accentset{\Box}{P}_n$.
\end{lemma}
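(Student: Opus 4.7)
The plan is to invoke \cref{lemma:KPM_connected}, which already establishes that connectedness is equivalent on the three non-orientable surfaces, thereby reducing the problem to verifying that (N2$\Box$) is violated by exactly the same colorings on $M$, $K$, and $P$ of size $1 \times n$. Since (N2$\Box$) prohibits precisely the interior vertices of square-degree $4$ entirely surrounded by water, the strategy is to compare interior vertex structures across the three surfaces and check that the additional identifications producing $K$ or $P$ from $M$ never create a new interior vertex of square-degree $4$.

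To carry this out, I would first enumerate the interior vertices of the $1 \times n$ Möbius strip: these lie along the identified horizontal boundary, and the incident squares at each such vertex lie among $\{j, j+1, n-j, n+1-j\}$ for the appropriate index $j$. Since the Klein bottle and projective plane are obtained from the Möbius strip by further identifying the vertical boundary edges, these horizontal interior vertices retain their identity and their square-incidences on $K$ and $P$; hence (N2$\Box$) fails at such a vertex on $K$ or $P$ if and only if it fails on $M$.

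Next I would examine the new interior vertices introduced by the vertical identification. These can only arise from the four corners of the Möbius fundamental domain, at which only squares $1$ and $n$ are incident. Depending on the convention in \cref{fig:nurikabe3}, either all four corners collapse to a single new interior vertex (Klein bottle) or they form two equivalence classes (projective plane); in either case, each new interior vertex has square-degree at most $2$, so it cannot be the center of a $2 \times 2$ subgrid of four distinct water squares. Combined with the previous step and \cref{lemma:KPM_connected}, this yields a coloring-wise bijection among the three sets and hence the desired equality $\accentset{\Box}{M}_n = \accentset{\Box}{K}_n = \accentset{\Box}{P}_n$. The main care needed is in correctly reading off the corner identifications from the gluing conventions, but there is no substantial obstacle beyond this bookkeeping.
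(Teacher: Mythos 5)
Your proposal is correct and follows essentially the same route as the paper: cite \cref{lemma:KPM_connected} for connectedness, observe that the only vertices affected by the vertical identification are the corner classes, and note that these have square-degree at most $2$ and so can never support a square whirlpool. Your extra care about whether the corners form one or two classes on the projective plane is harmless bookkeeping that does not change the argument.
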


\begin{proof}
By \cref{lemma:KPM_connected}, connectedness of water is equivalent on all three surfaces. We claim that (non)existence of square whirlpools is also independent of whether the fundamental domain is glued to a Möbius strip, Klein bottle, or projective plane. Specifically, the sets of interior vertices of square-degree $4$ coincide for all three surfaces: in the Möbius strip, the four corners of the fundamental domain correspond to two boundary vertices, and in the two closed surfaces they become a single interior vertex with square-degree $2$; all other vertices are unaffected by the choice of vertical edge (non)identification. It follows that square Nurikabe validity of a coloring is independent of the three surfaces. Hence, $\accentset{\Box}{M}_n = \accentset{\Box}{K}_n = \accentset{\Box}{P}_n$.
\end{proof}

\begin{lemma}\label{lemma:even_M_square_bij}
Rectangular reduction induces a bijection $\mathrm{red}: \accentset{\Box}{\mathcal{M}}_{2k} \to \mathcal{N}_k.$   
\end{lemma}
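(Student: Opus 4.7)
The plan is to verify that a coloring of the $1\times 2k$ Möbius strip $M$ satisfies (N1) and (N2$\Box$) if and only if its image $\mathrm{red}(M)$ on the $2\times k$ rectangle does; since $\mathrm{red}$ is evidently a bijection at the level of black-white colorings of the $2k$ underlying squares, this equivalence promotes to the asserted bijection $\accentset{\Box}{\mathcal{M}}_{2k}\to \mathcal{N}_k$. Connectedness is handled directly by \cref{lemma:connected_red}, so the only remaining content is the preservation of (N2$\Box$), i.e.\ a matching of interior vertices of square-degree $4$ between the two sides.

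For this I would first enumerate the interior vertices of $M$. With the $1\times 2k$ Möbius convention that identifies $(0,x) \sim (1,2k-x)$, the interior vertices are precisely the equivalence classes $V_x = \{(0,x),(1,2k-x)\}$ for $x=1,\ldots,2k-1$. The fixed class $V_k$ has square-degree $2$ (both pre-images sit between squares $k$ and $k+1$) and is therefore invisible to (N2$\Box$). The remaining $2k-2$ classes each have square-degree $4$ and naturally pair up as $\{V_c, V_{2k-c}\}$ for $c=1,\ldots,k-1$, with both members of a pair incident to the same four distinct squares in positions $c, c+1, 2k-c, 2k+1-c$. Meanwhile, following \cref{fig:rectred}, the reduction places the squares in positions $1,\ldots,k$ along the top row of $\mathrm{red}(M)$ in order and those in positions $2k, 2k-1, \ldots, k+1$ along the bottom row in reverse, so that the interior vertex of the $2\times k$ rectangle between columns $c$ and $c+1$ is surrounded by precisely this same quartet of squares. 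Thus a square whirlpool at $V_c$ (equivalently $V_{2k-c}$) in $M$ exists if and only if a square whirlpool exists at the corresponding interior vertex of $\mathrm{red}(M)$, giving the required equivalence of (N2$\Box$).

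The main obstacle is the column-by-column bookkeeping in the previous paragraph: tracking how the Möbius identification $(0,x)\sim(1,2k-x)$ interacts with cutting along the central loop and reidentifying to the $2\times k$ annulus, pinning down the exact placement of each Möbius square in $\mathrm{red}(M)$, and confirming that each paired pair $\{V_c, V_{2k-c}\}$ and the corresponding rectangle interior vertex share the same quartet of surrounding squares. Once this matching is in hand, the (N2$\Box$) equivalence --- and hence, combined with the connectedness equivalence from \cref{lemma:connected_red}, the bijection --- follows immediately.
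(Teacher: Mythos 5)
Your argument is correct and agrees with the paper's proof in substance: both handle (N1) via \cref{lemma:connected_red} and then observe that every square-degree-$4$ interior vertex of the Möbius strip shares its quartet of surrounding squares with an interior vertex of $\mathrm{red}(M)$, the only unmatched interior vertex being the central one of square-degree $2$, which (N2$\Box$) ignores. The paper packages this more briefly --- cutting creates no interior vertices (forward direction), and regluing creates exactly one new interior vertex of square-degree $2$ (surjectivity, via \cref{lemma:orientable_rect} for the annulus step) --- but that is the same content as your explicit matching of the vertex pairs $\{V_c,V_{2k-c}\}$ with the rectangle's interior vertices.
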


\begin{proof}
We first verify that $\mathrm{red}$ indeed has target $\mathcal{N}_k$. By \cref{lemma:connected_red}, rectangular reduction preserves connectedness of the water; moreover, it creates no new interior vertices and thus creates no new whirlpools. It follows that $\mathrm{red}(\accentset{\Box}{\mathcal{M}}_{2k}) \subset \mathcal{N}_k$. The map is injective by construction, so it remains to establish surjectivity. For $R\in \mathcal{N}_k$, let $M$ be the colored Möbius strip with $\mathrm{red}(M) = R$; we need to check that $M\in \accentset{\Box}{\mathcal{M}}_{2k}$. Again by \cref{lemma:connected_red} we know the water is connected on $M$. By \cref{lemma:orientable_rect} there are no whirlpools on the annulus obtained by identifying the horizontal edges of $R$; the final edge identification to produce $M$ only creates one additional interior vertex with square-degree $2$, hence it cannot become a square whirlpool. Thus, $M\in \accentset{\Box}{\mathcal{M}}_{2k}$ as desired. 
\end{proof}

\begin{lemma}\label{lemma:odd_square_M_bij}
There is a bijection
\[
\mathrm{red} \circ \mathrm{contr}: \accentset{\Box}{\mathcal{M}}_{2k+1} \to \mathcal{N}_k \sqcup \left(\mathcal{N}_{k,1} \cup \mathcal{N}_{k,2} \cup \{R_{\mathrm{land}}\}\right)
\]
where $R_{\mathrm{land}}\in \mathcal{N}_k$ is the $2\times k$ Nurikabe rectangle consisting of all land squares. 
\end{lemma}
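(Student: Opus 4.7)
The natural approach is to partition $\accentset{\Box}{\mathcal{M}}_{2k+1} = \accentset{\Box}{\mathcal{M}}_{2k+1,0} \sqcup \accentset{\Box}{\mathcal{M}}_{2k+1,1}$ according to whether the central $(k+1)$ square is land or water. By \cref{lemma:odd_M_both}, $\mathrm{red}\circ\mathrm{contr}$ already restricts to a bijection $\accentset{\Box}{\mathcal{M}}_{2k+1,0} \to \mathcal{N}_k$, which produces the $\mathcal{N}_k$ factor of the disjoint union on the right side. The remaining work is therefore to establish the bijection $\mathrm{red}\circ\mathrm{contr}: \accentset{\Box}{\mathcal{M}}_{2k+1,1} \to \mathcal{N}_{k,1}\cup\mathcal{N}_{k,2}\cup\{R_{\mathrm{land}}\}$.

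For $M \in \accentset{\Box}{\mathcal{M}}_{2k+1,1}$, the key geometric observation is that the four corners of the central square merge in pairs into two interior vertices, each of square-degree $3$ and each surrounded by $\{k, k+1, k+2\}$; hence (N2$\Box$) imposes no constraint at the center. Since the central square is horizontally adjacent only to squares $k$ and $k+2$, and these two squares are themselves vertically adjacent in $M$, connectedness of the water forces a dichotomy: either (i) the central square is the unique water square, or (ii) at least one of $k, k+2$ is water and the non-central water in $M$ already forms a single connected component not needing the center.

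To identify the image, I would first verify $\mathrm{contr}(M) \in \accentset{\Box}{\mathcal{M}}_{2k}$. Water connectedness in $\mathrm{contr}(M)$ is immediate in case (i) and follows from \cref{lemma:connected_contr} applied to the non-central water in case (ii). Contraction collapses the two degree-$3$ vertices at the center of $M$ into a single degree-$2$ pinch point in $\mathrm{contr}(M)$, while leaving every other interior vertex and its surrounding squares unchanged, so no degree-$4$ whirlpool is created. Applying \cref{lemma:even_M_square_bij} then gives $R := \mathrm{red}(\mathrm{contr}(M)) \in \mathcal{N}_k$. Because column $k$ of $R$ consists exactly of squares $k$ and $k+2$ of $M$, case (i) produces $R = R_{\mathrm{land}}$, while case (ii) produces $R \in \mathcal{N}_{k,1}$ or $R \in \mathcal{N}_{k,2}$ according to whether one or both of $k, k+2$ are water. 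Injectivity of $\mathrm{red}\circ\mathrm{contr}$ is clear from construction; for surjectivity, I would reverse $\mathrm{red}$ and $\mathrm{contr}$ and designate the central square water, observing that the re-introduced interior vertices near the center have square-degree $3$ (hence invisible to (N2$\Box$)) and that the column-$k$ condition on $R$ (or $R = R_{\mathrm{land}}$) guarantees the re-attached central water preserves connectivity.

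The principal obstacle is confirming the interior vertex structure around the central square—namely, that it comprises two degree-$3$ vertices rather than a degree-$4$ vertex—since this is exactly the feature that keeps (N2$\Box$) inert at the center and cleanly splits the target into the $\mathcal{N}_k$ piece (from the land-central case) and the column-$k$-restricted piece (from the water-central case).
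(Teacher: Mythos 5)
Your proposal is correct and follows essentially the same route as the paper: the same decomposition by the central square's color, the same appeal to \cref{lemma:odd_M_both} for the land-central case, the same observation that the two interior vertices at the center have square-degree $3$ (so (N2$\Box$) is inert there), and the same identification of column $k$ of the reduction with squares $k$ and $k+2$ of $M$ to land in $\mathcal{N}_{k,1}\cup\mathcal{N}_{k,2}\cup\{R_{\mathrm{land}}\}$. Your explicit verification of the vertex structure around the central square (two degree-$3$ vertices collapsing to one degree-$2$ vertex under contraction) is a correct elaboration of what the paper relegates to a footnote.
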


\begin{proof}
As in \cref{lemma:odd_M_both}, decompose $\accentset{\Box}{\mathcal{M}}_{2k+1} = \accentset{\Box}{\mathcal{M}}_{2k+1,0} \cup \accentset{\Box}{\mathcal{M}}_{2k+1,1}$, where the first (resp. second) subset consists of Nurikabe Möbius strips whose central square is land (resp. water). The referenced lemma gives a bijection 
\[
\mathrm{red} \circ \mathrm{contr}: \accentset{\Box}{\mathcal{M}}_{2k+1,0} \to \mathcal{N}_k.
\]
For the remaining subset, we establish a bijection 
\[
\mathrm{red} \circ \mathrm{contr}: \accentset{\Box}{\mathcal{M}}_{2k+1,1} \to \mathcal{N}_{k,1} \cup \mathcal{N}_{k,2} \cup \{R_{\text{land}}\}.
\]
Let $M_0\in \accentset{\Box}{\mathcal{M}}_{2k+1,1}$ denote the Nurikabe Möbius strip with precisely one water square (necessarily the central $k+1$ square); we have $(\mathrm{red} \circ \mathrm{contr})(M_0) = R_{\text{land}}$. Thus, we may assume without loss of generality that $k\geq 2$ and it remains to verify the bijection 
\begin{equation}\label{eq:MS1}
\mathrm{red} \circ \mathrm{contr}: \accentset{\Box}{\mathcal{M}}_{2k+1,1} \setminus M_0 \to \mathcal{N}_{k,1} \cup \mathcal{N}_{k,2}.    
\end{equation}
We begin by checking that $\mathrm{red} \circ \mathrm{contr}$ has target $\mathcal{N}_{k,1} \cup \mathcal{N}_{k,2}$ when restricted to $\accentset{\Box}{\mathcal{M}}_{2k+1,1} \setminus M_0$. For $M\in \accentset{\Box}{\mathcal{M}}_{2k+1,1} \setminus M_0$, at least one of the squares in position $k,k+2$ must be water: if they were both land, the water would be disconnected, as by assumption there are other water squares beyond square $k+1$.\footnote{Both squares $k$ and $k+2$ may plausibly be water, as the interior vertices adjacent to square $k+1$ have square-degree $3$ and thus cannot be square whirlpools.} Connectedness of the water and lack of square whirlpools persist under $\mathrm{contr}$, as contracting the central square creates no interior vertices of square degree $4$ and only deletes the $k+1$ vertex (and its incident edges) from the adjacency graph. It follows that $\mathrm{contr}(M)\in \accentset{\Box}{\mathcal{M}}_{2k}$ and moreover at least one of its central $k,k+1$ squares is water. By \cref{lemma:even_M_square_bij}, $\mathrm{red}(\mathrm{contr}(M)) \in \mathcal{N}_{k,1} \cup \mathcal{N}_{k,2}$ as desired. 

The map in \eqref{eq:MS1} is injective by construction, so we only need to check surjectivity. For $R\in \mathcal{N}_{k,1} \cup \mathcal{N}_{k,2}$, let $M':= \mathrm{red}^{-1}(R) \in \accentset{\Box}{\mathcal{M}}_{2k}$ be the square Nurikabe Möbius strip under the bijection in \cref{lemma:even_M_square_bij}. Let $M$ be the $1\times (2k+1)$ colored Möbius strip obtained from $M'$ by reversing contraction with a central water square. Since at least one of the $k,k+1$ squares in $M'$ is water, this additional water square in $M$ does not disconnect the water. No square whirlpools are created, as reversing contraction creates no additional interior vertices of square degree $4$. Thus, $M\in \accentset{\Box}{\mathcal{M}}_{2k+1,1} \setminus M_0$ and $(\mathrm{red} \circ \mathrm{contr})(M) = R$. This establishes the bijection \eqref{eq:MS1} and completes the proof. 
\end{proof}

\begin{proof}[Proof of \eqref{eq:main_square} of \cref{thm:main}.]
The bijections in \cref{lemma:even_M_square_bij} and \cref{lemma:odd_square_M_bij} give the counts $\accentset{\Box}{M}_{2k} = N_{k}$ and $\accentset{\Box}{M}_{2k+1} = N_k + N_{k,1} + N_{k,2} + 1$. In the latter case, \cref{prop:a_n} further gives 
\begin{align*}
{M}_{2k+1} &= N_k + N_{k,1} + N_{k,2} + 1 \\
&= N_k + (2^{k+1} - 2) + (2^k - 1) + 1 \\
&= N_k + 2^{k+1} + 2^k - 2 \\
&= N_k + 3\cdot 2^k - 2.
\end{align*}\end{proof}

\section{Loop whirlpool Nurikabe}\label{sec:loop_whirl}

In this section we complete the proof of \cref{thm:main}. \cref{subsec:mobius_loop} establishes the Möbius strip count in \eqref{eq:main_loop}, and \cref{subsec:klein_loop} considers the case of Klein bottles and projective planes to obtain \eqref{eq:main_loop2}.

\subsection{Möbius strips}\label{subsec:mobius_loop}

We first consider Möbius strips of even length. 

\begin{lemma}\label{lemma:mobius_loop_red_even}
For $k\geq 1$, there is a bijection
\[
\mathrm{red}: \accentset{\ocircle}{\mathcal{M}}_{2k} \to \mathcal{N}_{k,0} \cup \mathcal{N}_{k,1}
\]
and thus $\accentset{\ocircle}{M}_{2k} = N_{k,0} + N_{k,1} = N_{k} - N_{k,2}$.     
\end{lemma}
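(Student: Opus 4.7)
The plan is to refine the argument of Lemma \ref{lemma:even_M_square_bij} by tracking the one vertex where the loop rule bites but the square rule does not. Rectangular reduction passes through the $2\times k$ annulus, and in that proof the authors note that the final annulus-to-Möbius identification produces exactly one additional interior vertex of square-degree $2$. Under (N2$\Box$) this vertex is automatically harmless, which is why the square version is a clean bijection with all of $\mathcal{N}_k$. Under (N2$\ocircle$), however, that same vertex is a loop whirlpool exactly when both of its incident squares are water. Those incident squares are the Möbius squares $k$ and $k+1$ (they are vertically adjacent across the central twist, as used already in \cref{lemma:connected_red}), and they correspond to the two squares of column $k$ in $\mathrm{red}(M)$.

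First I would check $\mathrm{red}(\accentset{\ocircle}{\mathcal{M}}_{2k}) \subseteq \mathcal{N}_{k,0}\cup \mathcal{N}_{k,1}$. Connectedness of the water transfers by \cref{lemma:connected_red}. Since $\mathrm{red}$ does not create new interior vertices, no new whirlpools of any kind appear, so $\mathrm{red}(M)$ is already in $\mathcal{N}_k$. The new content compared to the square case is that loop validity of $M$ forces the distinguished square-degree-$2$ vertex not to be surrounded by water; hence at most one of squares $k, k+1$ is water in $M$, i.e.\ column $k$ of $\mathrm{red}(M)$ contains at most one water square.

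Injectivity is immediate from the construction of $\mathrm{red}$. For surjectivity, given $R \in \mathcal{N}_{k,0}\cup \mathcal{N}_{k,1}$, set $M := \mathrm{red}^{-1}(R)$. Connectedness of the water in $M$ again follows from \cref{lemma:connected_red}. Every interior vertex of $M$ other than the distinguished square-degree-$2$ vertex agrees (in square-degree and incident squares) with an interior vertex of the $2\times k$ annulus associated with $R$; by \cref{lemma:orientable_rect} none of these are surrounded by water. At the distinguished vertex, its two incident squares are precisely those in column $k$ of $R$, and by the defining property of $\mathcal{N}_{k,0}\cup \mathcal{N}_{k,1}$ at most one is water. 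Therefore no loop whirlpool exists on $M$, so $M\in \accentset{\ocircle}{\mathcal{M}}_{2k}$.

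The count $\accentset{\ocircle}{M}_{2k}=N_{k,0}+N_{k,1}=N_k - N_{k,2}$ then follows from the partition $\mathcal{N}_k = \mathcal{N}_{k,0}\sqcup\mathcal{N}_{k,1}\sqcup\mathcal{N}_{k,2}$ according to how many squares in column $k$ are water. I expect the only point requiring care is pinning down that the extra interior vertex created by the annulus-to-Möbius gluing has square-degree exactly $2$, with incident squares $k$ and $k+1$ of $M$ (equivalently, the pair of squares in column $k$ of $\mathrm{red}(M)$); but this is essentially forced by the cutting procedure that defines $\mathrm{red}$ and is already implicit in the proofs of \cref{lemma:connected_red} and \cref{lemma:even_M_square_bij}.
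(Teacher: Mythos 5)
Your proposal is correct and follows essentially the same route as the paper: both arguments isolate the single square-degree-$2$ interior vertex created by the final Möbius gluing, observe that its two incident squares are the $k$ and $k{+}1$ squares of $M$ (equivalently column $k$ of $\mathrm{red}(M)$), and conclude that loop validity of $M$ is equivalent to $\mathrm{red}(M)\notin\mathcal{N}_{k,2}$, with the remaining vertices handled by \cref{lemma:connected_red} and \cref{lemma:orientable_rect} exactly as in the square case. No gaps.
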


\begin{proof}
First, we verify that if $M\in \accentset{\ocircle}{\mathcal{M}}_{2k}$, its rectangular reduction $\mathrm{red}(M)$ is an element of $\mathcal{N}_{k,0} \cup \mathcal{N}_{k,1}$. By \cref{lemma:connected_red}, connectedness of the water on $M$ persists in $\mathrm{red}(M)$, and as cutting operations produce no new interior vertices the lack of whirlpools on $M$ also persists in $\mathrm{red}(M)$. These observations imply that $\mathrm{red}(M) \in \mathcal{N}_k$. That $\mathrm{red}(M) \in \mathcal{N}_{k,0} \cup \mathcal{N}_{k,1}$ follows from the observation that if $\mathrm{red}(M)\in \mathcal{N}_{k,2}$ then $M$ has a (loop) whirlpool. Indeed, the central (interior) vertex is entirely surrounded by water in $M$, as shown in \cref{fig:redpool}. 

\begin{figure}[ht]
	\centering
	\begin{overpic}[scale=.44]{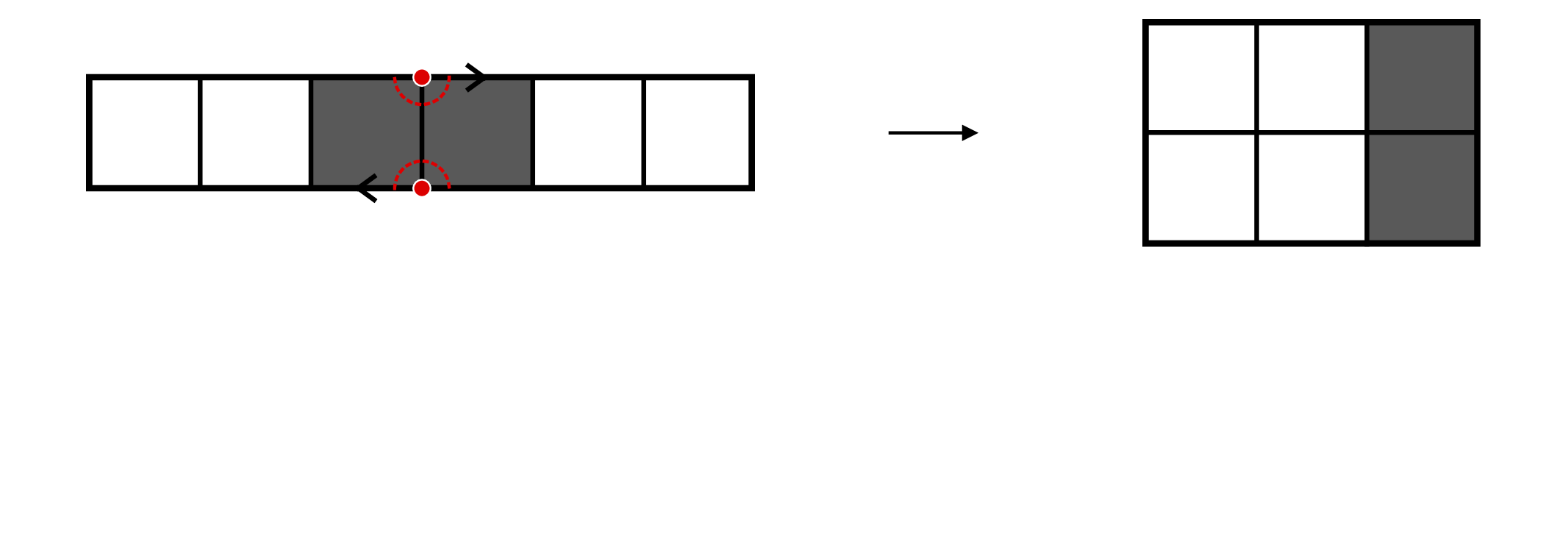}
      
	\end{overpic}
    \vskip-2.5cm
	\caption{A Möbius strip whose central interior vertex (in red) is surrounded by water yields a rectangle under rectangular reduction whose rightmost column is entirely water.} 
	\label{fig:redpool}
\end{figure}

The map $\mathrm{red}: \accentset{\ocircle}{\mathcal{M}}_{2k} \to \mathcal{N}_{k,0} \cup \mathcal{N}_{k,1}$ is injective by construction. To see surjectivity it suffices to verify that any $R\in \mathcal{N}_{k,0} \cup \mathcal{N}_{k,1}$ glues to a valid Nurikabe Möbius strip under the inverse of rectangular reduction. Again, by \cref{lemma:connected_red} we only need to check that no (loop) whirlpools are created in the gluing process. By \cref{lemma:orientable_rect}, all interior vertices created when producing an annulus are surrounded by $2\times 2$ sub-grids that already surrounded interior vertices on the rectangle, thus no whirlpool is created at this step. When gluing the final pair of edges to produce the Möbius strip, only one new interior vertex is created. However, since $R\notin \mathcal{N}_{k,2}$, this interior vertex is incident to at least one land square. Thus, $\mathrm{red}^{-1}(R)$ has no whirlpools. This establishes the bijection and verifies the first equality in the lemma; the second is true by rearranging the (definitional) fact that $N_k = N_{k,0} + N_{k,1} + N_{k,2}$.
\end{proof}

Next we consider Möbius strips of odd length. 

\begin{lemma}\label{lemma:mobius_loop_odd}
For $k\geq 1$, there is a bijection 
\[
\mathrm{red} \circ \mathrm{contr}: \accentset{\ocircle}{\mathcal{M}}_{2k+1} \to \mathcal{N}_k \sqcup \left(\mathcal{N}_{k,1} \cup \{R_{\mathrm{land}}\}\right)
\]
and thus $\accentset{\ocircle}{M}_{2k+1} = N_k + N_{k,1} + 1 = N_k + N_{k+1,2}$.    
\end{lemma}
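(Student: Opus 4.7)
The plan is to mirror the strategy of \cref{lemma:odd_square_M_bij}: decompose $\accentset{\ocircle}{\mathcal{M}}_{2k+1} = \accentset{\ocircle}{\mathcal{M}}_{2k+1,0} \cup \accentset{\ocircle}{\mathcal{M}}_{2k+1,1}$ according to whether the central $(k+1)$-square is land or water. The land case is handled by \cref{lemma:odd_M_both}, which already supplies the bijection $\mathrm{red}\circ\mathrm{contr}:\accentset{\ocircle}{\mathcal{M}}_{2k+1,0}\to \mathcal{N}_k$ and thereby contributes the first factor of the disjoint union. The remaining task is to build a bijection
\[
\mathrm{red}\circ\mathrm{contr}:\accentset{\ocircle}{\mathcal{M}}_{2k+1,1}\to \mathcal{N}_{k,1}\cup\{R_{\mathrm{land}}\}.
\]

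The key geometric observation is that in a $1\times(2k+1)$ Möbius strip the two interior vertices adjacent to the central square have square-degree $3$, with incident squares $k$, $k+1$ (appearing twice via its vertical self-adjacency), and $k+2$. When the central square is water, loop-whirlpool avoidance forces \emph{at least one} of $\{k, k+2\}$ to be land, while water-connectedness --- assuming any water beyond the central square --- forces \emph{at least one} of $\{k, k+2\}$ to be water, since these are the only adjacency-graph neighbors of $k+1$. Thus each $M\in \accentset{\ocircle}{\mathcal{M}}_{2k+1,1}$ falls into exactly one of two cases: either the central square is the unique water square --- the singular strip $M_0$, which maps to $R_{\mathrm{land}}$ under $\mathrm{red}\circ\mathrm{contr}$ --- or exactly one of positions $k, k+2$ is water, in which case contraction produces an even-length Möbius strip whose central column (the old positions $k$ and $k+2$) contains one water and one land, so $\mathrm{red}\circ\mathrm{contr}(M)\in \mathcal{N}_{k,1}$.

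I would verify well-definedness on $\accentset{\ocircle}{\mathcal{M}}_{2k+1,1}\setminus\{M_0\}$ by noting via \cref{lemma:connected_contr} that the adjacency graph of $\mathrm{contr}(M)$ differs from that of $M$ only by deletion of the vertex $k+1$, so the surviving water set stays connected through the persistent vertical adjacency of $k$ and $k+2$; and since contraction creates no new interior vertices, no new loop whirlpools are introduced. Injectivity is immediate from construction. For surjectivity, given $R\in \mathcal{N}_{k,1}$, apply $\mathrm{red}^{-1}$ via \cref{lemma:mobius_loop_red_even} to obtain $M'\in \accentset{\ocircle}{\mathcal{M}}_{2k}$ and then reverse contraction by inserting a water square in position $k+1$; the two newly created degree-$3$ interior vertices are harmless because exactly one of $k, k+2$ in the resulting $M$ is land (the land square of the last column of $R$), and connectedness is preserved because the inserted water square is adjacent to the remaining water among $\{k, k+2\}$. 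Combining the two bijections yields $\accentset{\ocircle}{M}_{2k+1}=N_k+N_{k,1}+1$, and the identity $N_{k,1}+1 = N_{k+1,2}$ is an immediate consequence of \cref{prop:a_n}. The main obstacle is unpacking the delicate interaction of connectedness and loop-whirlpool avoidance at the two degree-$3$ vertices neighboring the central square --- once this forces exactly one water among $\{k, k+2\}$ in the non-trivial case, the bijection unfolds in complete parallel with \cref{lemma:odd_square_M_bij}, with the extra $+1$ from $\{R_{\mathrm{land}}\}$ accounting for the lone strip $M_0$.
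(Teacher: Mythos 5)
Your proposal is correct and follows essentially the same route as the paper: the same decomposition by the central square's color, the same appeal to \cref{lemma:odd_M_both} for the land case, the same key observation that whirlpool-avoidance and connectedness force exactly one of the squares in positions $k,k+2$ to be water, and the same reverse-contraction argument for surjectivity. The only cosmetic difference is that the relevant point for connectedness after contraction is that the central water square has a unique water neighbor (so it is a leaf of the water's adjacency subgraph), rather than the vertical adjacency of $k$ and $k+2$ per se, but this does not affect the argument.
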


\begin{proof}
Decompose $\accentset{\ocircle}{\mathcal{M}}_{2k+1} = \accentset{\ocircle}{\mathcal{M}}_{2k+1,0} \cup \accentset{\ocircle}{\mathcal{M}}_{2k+1,1}$, where the first (resp. second) subset consists of Nurikabe Möbius strips whose central square is land (resp. water). \cref{lemma:odd_square_M_bij} gives a bijection 
\[
\mathrm{red} \circ \mathrm{contr}: \accentset{\ocircle}{\mathcal{M}}_{2k+1,0} \to \mathcal{N}_k.
\]
Next we claim that there is a bijection 
\[
\mathrm{red} \circ \mathrm{contr}: \accentset{\ocircle}{\mathcal{M}}_{2k+1,1} \to \{R_{\text{land}}\} \cup \mathcal{N}_{k,1}.
\]
The proof is similar to that of \cref{lemma:odd_square_M_bij} with slight modifications due to the loop whirlpool rule. Let $M_0\in \accentset{\ocircle}{\mathcal{M}}_{2k+1,1}$ denote the Nurikabe Möbius strip with precisely one (central) water square, so that $(\mathrm{red} \circ \mathrm{contr})(M_0) = R_{\text{land}}$. We only need to verify the bijection 
\[
\mathrm{red} \circ \mathrm{contr}: \accentset{\ocircle}{\mathcal{M}}_{2k+1,1} \setminus M_0 \to \mathcal{N}_{k,1}.
\]
For $M\in \accentset{\ocircle}{\mathcal{M}}_{2k+1,1} \setminus M_0$, exactly one of the squares adjacent to the central water square must be water: if they were both water, there would be a loop whirlpool, and if they were both land, then the water would be disconnected, as by assumption there are other water squares beyond the central square. Connectedness of the water and lack of whirlpools then persist under $\mathrm{contr}$, as contracting the central square only affects adjacencies and interior vertices of the squares immediately adjacent to it. By \cref{lemma:mobius_loop_red_even} it follows that $\mathrm{red}(\mathrm{contr}(M))\in \mathcal{N}_{k,1}$. 

As usual, injectivity follows by construction. To see surjectivity, for $R\in \mathcal{N}_{k,1}$ we let $M' = \mathrm{red}^{-1}(R)$ denote the corresponding even Möbius strip, which is Nurikabe valid by \cref{lemma:mobius_loop_red_even}. Let $M$ be the odd Möbius strip obtained from $M'$ by reversing contraction with a central water square. Adding an additional water square cannot disconnect the water on $M$, and no whirlpools are created because one of the squares adjacent to the newly-created central water square is land. Thus, $M\in \accentset{\ocircle}{\mathcal{M}}_{2k+1,1} \setminus M_0$ and $(\mathrm{red}\circ \mathrm{contr})(M) = R$. 

The two bijections 
\begin{align*}
\accentset{\ocircle}{\mathcal{M}}_{2k+1,0} &\to \mathcal{N}_k \\
    \accentset{\ocircle}{\mathcal{M}}_{2k+1,1} &\to \{R_{\text{land}}\} \cup \mathcal{N}_{k,1}
\end{align*}
then imply 
\[
\accentset{\ocircle}{M}_{2k+1} = |\accentset{\ocircle}{\mathcal{M}}_{2k+1,0}| + |\accentset{\ocircle}{\mathcal{M}}_{2k+1,1}| = N_k + N_{k,1} + 1
\]
as desired. The second equality in the statement of the lemma is then a consequence of \cref{prop:a_n}.
\end{proof}

\begin{proof}[Proof of \eqref{eq:main_loop} of \cref{thm:main}.]
By \cref{lemma:mobius_loop_red_even} and \cref{prop:a_n} we have 
\[
\accentset{\ocircle}{M}_{2k} = N_k - N_{k,2} = N_k - (2^k - 1) = N_k - 2^k + 1.
\]
Similarly, \cref{lemma:mobius_loop_odd} and \cref{prop:a_n} combine to give 
\[
\accentset{\ocircle}{M}_{2k+1} = N_k + N_{k+1,2} = N_k + (2^{k+1} - 1) = N_k + 2^{k+1} - 1.
\]
This completes the proof. 
\end{proof}

\subsection{Klein bottles and projective planes}\label{subsec:klein_loop}

Finally, we consider $1\times n$ Klein bottles and projective planes as depicted in the second and third rows of \cref{fig:nurikabe3}, establishing \eqref{eq:main_loop2} of \cref{thm:main}. We first observe that $\accentset{\ocircle}{K}_n = \accentset{\ocircle}{P}_n$, as the choice of gluing direction for the vertical edges does not affect the adjacency graph, nor does it affect the local structure of the interior vertex corresponding to the four corners of the fundamental domain, which is a loop whirlpool (in either case) if and only if squares $1$ and $n$ are both water. Thus, we restrict without loss of generality to the Klein bottle.

Our next observation is the following. Let $_2\accentset{\ocircle}{\mathcal{M}}_n$ denote the collection of loop Nurikabe Möbius strips of size $1\times n$ such that the squares $1$ and $n$ are water, and let $_2\accentset{\ocircle}{M}_n:=|_2\accentset{\ocircle}{\mathcal{M}}_n|$.

\begin{lemma}\label{lemma:klein1}
For $n\geq 2$, we have $\accentset{\ocircle}{K}_n =\accentset{\ocircle}{M}_n - \,_2\accentset{\ocircle}{M}_n$.  
\end{lemma}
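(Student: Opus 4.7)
The plan is to exhibit an identification between $\accentset{\ocircle}{\mathcal{K}}_n$ and $\accentset{\ocircle}{\mathcal{M}}_n \setminus \,_2\accentset{\ocircle}{\mathcal{M}}_n$, from which the desired equality of counts follows immediately. The key observation, already used in the preceding paragraph to equate the Klein and projective plane counts, is that a $1\times n$ Klein bottle is obtained from a $1\times n$ Möbius strip by gluing the two vertical edges, and this additional gluing affects neither the adjacency graph nor any interior vertex other than the single one produced by the four corners of the fundamental domain.

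More precisely, I would first invoke \cref{lemma:KPM_connected} to conclude that a coloring is connected on the Möbius strip if and only if it is connected on the Klein bottle. Next I would argue that the set of interior vertices of the Möbius strip is a subset of the set of interior vertices of the Klein bottle, differing by exactly one new vertex: the interior vertex arising from identifying the vertical edges, which is incident precisely to the squares in positions $1$ and $n$ (each appearing with multiplicity two, so the new vertex has square-degree $2$). Consequently, any loop whirlpool on the Klein bottle that is not already a loop whirlpool on the Möbius strip must be located at this new corner vertex, and this happens if and only if both square $1$ and square $n$ are water.

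Combining these observations, a coloring of the $1\times n$ fundamental domain is loop Nurikabe valid as a Klein bottle if and only if (i) it is loop Nurikabe valid as a Möbius strip and (ii) squares $1$ and $n$ are not both water. The first condition identifies the coloring with an element of $\accentset{\ocircle}{\mathcal{M}}_n$, and the second condition excludes exactly the subset $_2\accentset{\ocircle}{\mathcal{M}}_n$. This yields the set equality $\accentset{\ocircle}{\mathcal{K}}_n = \accentset{\ocircle}{\mathcal{M}}_n \setminus \,_2\accentset{\ocircle}{\mathcal{M}}_n$, and passing to cardinalities gives the claim.

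I do not anticipate any serious obstacle here: the main point is a careful bookkeeping of which interior vertices are created or modified by the vertical edge identification, and the square-degree $2$ of the new corner vertex makes the whirlpool condition trivial to state. The only subtlety is making sure no other interior vertex silently becomes a loop whirlpool when the Möbius identification is upgraded to a Klein identification, which follows because all non-corner interior vertices have the same local incident-square structure on both surfaces.
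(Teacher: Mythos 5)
Your argument is correct and matches the paper's proof of \cref{lemma:klein1} essentially verbatim: both invoke \cref{lemma:KPM_connected} for connectedness and then observe that the only new feature of the Klein gluing is the single interior vertex formed from the four corners of the fundamental domain, which is a loop whirlpool exactly when squares $1$ and $n$ are both water. Your extra bookkeeping (noting the new vertex has square-degree $2$ and that no other interior vertex changes) is a harmless elaboration of the same idea.
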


\begin{proof}
By \cref{lemma:KPM_connected}, connectedness of water is equivalent when viewed on the Klein bottle or Möbius strip. The only change in gluing to a Klein bottle is that the vertex corresponding to the four corners of the fundamental domain becomes an interior vertex, rather than a boundary vertex as on the Möbius strip. In particular, the only qualitative difference on the Klein bottle is the possibility that this interior vertex is a loop whirlpool, which happens precisely when squares $1$ and $n$ are water. Thus, we count $\accentset{\ocircle}{K}_n$ by subtracting from $\accentset{\ocircle}{M}_n$ the number of such Möbius strips. 
\end{proof}

\begin{proof}[Proof of \eqref{eq:main_loop2} of \cref{thm:main}.]
The counts $\accentset{\ocircle}{K}_{1}, \accentset{\ocircle}{K}_{2}$, $\accentset{\ocircle}{K}_{3}$, and $\accentset{\ocircle}{K}_{4}$ are explicit enumerations which are necessary because of the range of the $k$-index in \cref{lemma:jacobsthal}. The case $\accentset{\ocircle}{K}_{4} = 7$ is provided in \cref{fig:K4}, along with the associated counts $\accentset{\ocircle}{M}_{4} = 10$ and $_2\accentset{\ocircle}{M}_{4} = 3$ to clarify the relationship described in \cref{lemma:klein1}.

\begin{figure}[ht]
	\centering
	\begin{overpic}[scale=.44]{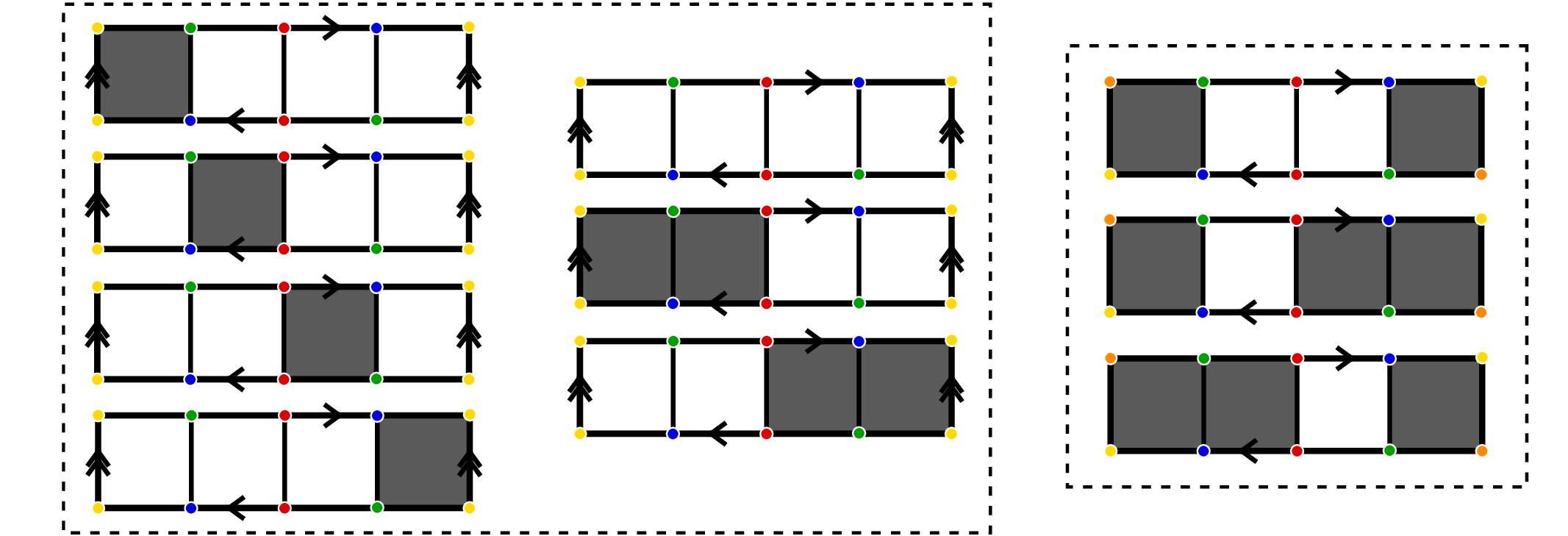}
      
	\end{overpic}
    
	\caption{The case $n=4$ for the loop Klein bottle count (left).} 
	\label{fig:K4}
\end{figure}

Now assume $n\geq 5$. Recall that \cref{lemma:mobius_loop_red_even} gives a bijection $\mathrm{red}:\accentset{\ocircle}{\mathcal{M}}_{2k} \to \mathcal{N}_{k,0} \cup \mathcal{N}_{k,1}$ and thus the count $\accentset{\ocircle}{M}_{2k} = N_k - N_{k,2}$. Filtering for rectangles whose leftmost column is all water, it follows that 
\begin{align*}
_2\accentset{\ocircle}{M}_{2k} &=\, _2N_k - \, _2N_{k,2} \\
&= N_{k,2} - 2J_{k-1}.
\end{align*}
Here, $_2N_k$ counts the number of $2\times k$ Nurikabe rectangles whose first column is entirely water; we have $_2N_k = N_{k,2}$ by reflection, hence the second equality above follows from this and \cref{lemma:jacobsthal}. Then, by \cref{lemma:klein1} and \cref{prop:a_n} we have 
\begin{align*}
\accentset{\ocircle}{K}_{2k} &=\accentset{\ocircle}{M}_{2k} - \,_2\accentset{\ocircle}{M}_{2k}    \\
&= (N_k - N_{k,2}) - (N_{k,2} - 2J_{k-1}) \\
&= N_k - 2N_{k,2} + 2J_{k-1} \\
&= N_k  + 2J_{k-1} - 2(2^k - 1) \\
&= N_k  + 2J_{k-1} - 2^{k+1} +2.
\end{align*}
Similarly, the bijection 
\[
\accentset{\ocircle}{\mathcal{M}}_{2k+1} = \accentset{\ocircle}{\mathcal{M}}_{2k+1,0} \cup \accentset{\ocircle}{\mathcal{M}}_{2k+1,1} \to \mathcal{N}_k \sqcup \left(\mathcal{N}_{k,1} \cup \{R_{\text{land}}\}\right)
\]
from \cref{lemma:mobius_loop_odd} gave the count $\accentset{\ocircle}{M}_{2k+1} = N_k + N_{k,1} + 1$. After filtering for those rectangles with a first column consisting of two water squares and applying \cref{lemma:jacobsthal}, we have 
\begin{align*}
_2\accentset{\ocircle}{M}_{2k+1} &=\, _2N_k +\, _2N_{k,1} + 0 \\
&= N_{k,2} + 2J_k.
\end{align*}
Therefore, 
\begin{align*}
\accentset{\ocircle}{K}_{2k+1} &=\accentset{\ocircle}{M}_{2k+1} - \,_2\accentset{\ocircle}{M}_{2k+1}    \\    
&= (N_k + 2^{k+1} - 1) - (N_{k,2} + 2J_k) \\
&= N_k - 2J_k + (2^{k+1} - 1) - (2^k - 1) \\
&= N_k - 2J_k + 2^k,
\end{align*}
completing the proof.
\end{proof}

\bibliography{references}

@inproceedings{amos2019ant,
author = {Amos, Martyn and Crossley, Matthew and Lloyd, Huw},
title = {Solving {N}urikabe with ant colony optimization},
year = {2019},
isbn = {9781450367486},
publisher = {Association for Computing Machinery},
address = {New York, NY, USA},
url = {https://doi.org/10.1145/3319619.3338470},
doi = {10.1145/3319619.3338470},
booktitle = {Proceedings of the Genetic and Evolutionary Computation Conference Companion},
pages = {129–-130},
numpages = {2},
keywords = {NP-complete, ant colony optimization, combinatorial optimization, puzzle game},
location = {Prague, Czech Republic},
series = {GECCO '19}
}

@article{boswell2022countingislands,
  author = {Boswell, Jacob A. and Clark, Jacob. N. and Curtis, Chip},
  title = {Counting Islands in {N}urikabe},
  journal = {Math. Mag.},
  volume = {95},
  number = {5},
  pages = {545--553},
  year = {2022},
  doi = {10.1080/0025570X.2022.2127306}
}

@misc{goertz2024quaternarygraycodeused,
      title={The Quaternary Gray Code and How It Can Be Used to Solve {Z}iggurat and Other {Z}iggu Puzzles}, 
      author={Madeleine Goertz and Aaron Williams},
      year={2024},
      eprint={2411.19291},
      note={arXiv:2411.19291},
      archivePrefix={arXiv},
      primaryClass={math.CO},
      url={https://arxiv.org/abs/2411.19291}, 
}

@article{holzer2011computational,
author = {Markus Holzer and Andreas Klein and Martin Kutrib and Oliver Ruepp},
title ={Computational complexity of {N}urikabe},
journal = {Fundam. Inform.},
volume = {110},
number = {1-4},
pages = {159--174},
year = {2011},
doi = {10.3233/FI-2011-534},
}

@misc{lance2014toroidal,
  author       = {Lance, Jacob},
  title        = {Puzzle 41: Nurikabe (Toroidal)},
  year         = {2014},
  month        = {March},
  day          = {12},
  url          = {https://jacoblance.wordpress.com/2014/03/12/puzzle-41-nurikabe-toroidal/},
  note         = {Blog post},
  urldate      = {2025-05-06}
}

@mastersthesis{mcphail2003complexity,
  author = {McPhail, Brandon P.},
  title = {The Complexity of Puzzles: {NP}-Completeness Results for {Nurikabe} and {Minesweeper}},
  school = {Reed College},
  year = {2003},
  month = {December},
  type = {Bachelor's Thesis},
  address = {Portland, Oregon}
}

@InProceedings{robert2021zkp,
author="Robert, L{\'e}o
and Miyahara, Daiki
and Lafourcade, Pascal
and Mizuki, Takaaki",
editor="De Mol, Liesbeth
and Weiermann, Andreas
and Manea, Florin
and Fern{\'a}ndez-Duque, David",
title="{Interactive Physical ZKP for Connectivity: Applications to Nurikabe and Hitori}",
booktitle="Connecting with Computability",
year="2021",
publisher="Springer International Publishing",
address="Cham",
pages="373--384",
isbn="978-3-030-80049-9"
}

@misc{oeis,
    Author = {{OEIS Foundation Inc.}},
    Note = {Published electronically at \url{http://oeis.org}},
    Title = {The {O}n-{L}ine {E}ncyclopedia of {I}nteger {S}equences},
    Year = 2025
}

@book{ForniMatheus2018,
  title     = {{Translation Surfaces}},
  author    = {Forni, Giovanni and Matheus, Carlos},
  year      = {2018},
  publisher = {Springer International Publishing},
  series    = {Universitext},
  doi       = {10.1007/978-3-319-94294-7},
  isbn      = {978-3-319-94293-0}
}

@article{eskin2015isolation,
  title={Isolation, equidistribution, and orbit closures for the {SL}(2, $\mathbb{R}$) action on moduli space},
  author={Eskin, Alex and Mirzakhani, Maryam and Mohammadi, Amir},
  journal={Ann. Math.},
  volume={182},
  number={2},
  pages={673--721},
  year={2015},
  publisher={Princeton University and the Institute for Advanced Study}
}

@article{wright2016rational,
  title={From rational billiards to dynamics on moduli spaces},
  author={Wright, Alex},
  journal={Bull. Am. Math. Soc.},
  volume={53},
  number={1},
  pages={41--56},
  year={2016},
  doi={10.1090/bull/1513},
  publisher={American Mathematical Society}
}

@book{rosenhouse2011taking,
  title={{Taking Sudoku Seriously: The Math Behind the World's Most Popular Pencil Puzzle}},
  author={Rosenhouse, Jason and Taalman, Laura},
  publisher={Oxford University Press},
  year={2011},
  pages={214},
  isbn={978-0-19-991315-2},
  address={Oxford}
}
\bibliographystyle{amsalpha}

\end{document}